\documentclass{amsart}
\usepackage{latexsym,amssymb,amsthm,amsmath}

\usepackage{amssymb}
\usepackage{amsmath}

\theoremstyle{plain}
\newtheorem{theorem}{Theorem}[section]
\newtheorem*{Theorem B}{Theorem B}
\newtheorem*{Theorem A}{Theorem A}

\newtheorem{proposition}{Proposition}[section]
\newtheorem{corollary}{Corollary}[section]

\newtheorem{example}{Example}[section]
\numberwithin{equation}{section}

\theoremstyle{remark}

\sloppy

\setcounter{page}{1}

\begin{document}
\title[Invariant submanifolds of generalized Sasakian-space-forms]{Invariant submanifolds of generalized Sasakian-space-forms}
\author[S. K. Hui, S. Uddin, A.H. Alkhaldi and P. Mandal]{Shyamal Kumar Hui, Siraj Uddin, Ali H. Alkhaldi and Pradip Mandal}
\subjclass[2000]{53C15, 53C40}
\keywords{generalized Sasakian-space-forms, invariant submanifold, semi parallel submanifold, totally geodesic, semi-symmetric metric connection, Ricci soliton.}
\begin{abstract}
The present paper deals with the study of invariant submanifolds of generalized Sasakian-space-forms
with respect to Levi-Civita connection as well as semi-symmetric metric connection.
We provide some examples of such submanifolds and obtain many new results including, the necessary
and sufficient conditions under which the submanifolds are totally geodesic. The Ricci solitons of such submanifolds
are also studied.
\end{abstract}
\maketitle
\section{Introduction}
It is well known that in differential geometry the curvature of a Riemannian
manifold plays a basic role and the sectional curvatures of a manifold determine
the curvature tensor $\overline{R}$ completely. A Riemannian manifold with constant sectional
curvature $c$ is called a real-space-form and its curvature tensor $R$ satisfies
the condition
\begin{align}
\label{eqn1.1}
\overline{R}(X,Y)Z = c\{g(Y,Z)X-g(X,Z)Y\}.
\end{align}
Models for these spaces are the Euclidean spaces ($c=0$), the spheres ($c>0$)
and the hyperbolic spaces $(c<0)$.

In contact metric geometry, a Sasakian manifold with constant $\phi$-sectional curvature
is called Sasakian-space-form and the curvature tensor of such a manifold is given by
\begin{align}
\label{eqn1.2}
\overline{R}(X,Y)Z &= \frac{c+3}{4}\big\{g(Y,Z)X-g(X,Z)Y\big\}\\
\nonumber&+\frac{c-1}{4}\big\{g(X,\phi Z)\phi Y
- g(Y,\phi Z)\phi X + 2g(X,\phi Y)\phi Z\big\}\\
\nonumber&+\frac{c-1}{4}\big\{\eta(X)\eta(Z)Y - \eta(Y)\eta(Z)X\\
\nonumber&+g(X,Z)\eta(Y)\xi - g(Y,Z)\eta(X)\xi\big\}.
\end{align}
These spaces can also be modeled depending on $c>-3$, $c=-3$ or $c<-3$.

As a generalization of Sasakian-space-form, in \cite{ALEGRE1}
Alegre, Blair and Carriazo
introduced and studied the notion of a generalized
Sasakian-space-form with the existence of such notions by several interesting examples.
An almost contact metric manifold
$M(\phi,\xi,\eta,g)$ is called generalized Sasakian-space-form
if there exist three functions
$f_1$, $f_2$, $f_3$ on $\overline{M}$ such that \cite{ALEGRE1}
\begin{align}
\label{eqn1.3}
\overline{R}(X,Y)Z &=f_1\big\{g(Y,Z)X-g(X,Z)Y\big\}\\
\nonumber&+f_2\big\{g(X,\phi Z)\phi Y - g(Y,\phi Z)\phi X + 2g(X,\phi Y)\phi Z\big\}\\
\nonumber&+f_3\big\{\eta(X)\eta(Z)Y - \eta(Y)\eta(Z)X\\
\nonumber&+g(X,Z)\eta(Y)\xi - g(Y,Z)\eta(X)\xi\big\}
\end{align}
for all vector fields $X$, $Y$, $Z$ on $\overline{M}$, where $\overline{R}$
is the curvature tensor of $\overline{M}$ and such a manifold of dimension
$(2n+1)$, $n>1$ (the condition $n>1$ is assumed throughout the paper),
is denoted by $\overline{M}^{2n+1}(f_1,f_2,f_3)$.

In particular, if $f_1 = \frac{c+3}{4}$, $f_2 = f_3 = \frac{c-1}{4}$
then the generalized Sasakian-space-forms reduces to the notion of
Sasakian-space-forms. But it is to be noted that generalized Sasakian-space-forms
are not merely generalization of Sasakian-space-forms. It also contains a large class of
almost contact manifolds. For example it is known that \cite{ALEGRE2} any three dimensional
$(\alpha, \beta)$-trans Sasakian manifold with $\alpha$, $\beta$ depending on $\xi$ is a
generalized Sasakian-space-form. However, we can find generalized Sasakian-space-forms with
non-constant functions and arbitrary dimensions.

The generalized Sasakian-space-forms have been studied by several authors such as
Alegre and Carriazo (\cite{ALEGRE2}, \cite{ALEGRE3}, \cite{ALEGRE4}), Belkhelfa et al. \cite{BEL}, Carriazo \cite{CARR}, 
Al-Ghefari et al. \cite{SHAHID}, Gherib et al. \cite{GHERIB}, Hui et al. (\cite{HUI2}, \cite{HUI2}), Kim \cite{KIM} and many others.

In modern analysis, the geometry of submanifolds has become a subject of growing interest for its significant
applications in applied mathematics and theoretical physics. For instance,  the notion of invariant submanifold
is used to discuss properties of non-linear autonomous system \cite{inv}. For totaly geodesic submanifolds, the geodesics of the
ambient manifolds remain geodesics in the submanifolds. Hence totaly geodesic submanifolds are also very much
important in physical sciences. The study of geometry of invariant submanifolds was initiated by Bejancu and
Papaghuic \cite{bejancu}. In general the geometry of an invariant submanifold inherits almost all properties of the
ambient manifold. The invariant submanifolds have been studied by many geometers to different extent such
as \cite{HKMA}, \cite{KEN}, \cite{KON}, \cite{ASMS1}, \cite{SHAIKH2}, \cite{ISHI}, \cite{YIL} and others.\\
\indent Motivated by the above studies the present paper deals with the study of invariant submanifolds
of generalized Sasakian-space-forms. The paper is organized as follows. Section $2$ is concerned with
some preliminaries formulas and definitions. In this section, we provide some non-trivial examples of invariant, anti-invariant and proper slant submanifolds. Section $3$ is devoted to the study of invariant submanifolds
of generalized Sasakian-space-forms. In this section we study parallel, semiparallel
and 2-semiparallel invariant submanifolds of generalized sasakian-space-forms. Section $4$ deals with the
study of  invariant submanifolds of generalized Sasakian-space-forms with respect to semi-symmetric metric connection.\\
\indent In $1982$, Hamilton \cite{HAM1} introduced the notion of Ricci flow to find the canonical metric on a smooth manifold. Then Ricci flow
has become a powerful tool for the study of Riemannian manifolds, especially for those manifolds with positive curvature.
Perelman \cite {PER1} used Ricci flow and its surgery to prove Poincare conjecture.
The Ricci flow is an evolution equation for metrics on a Riemannian manifold defined as follows:
\begin{equation*}
\frac{\partial}{\partial t} g_{ij}(t)=-2R_{ij}
\end{equation*}
\indent A Ricci soliton emerges as the limit of the solutions of the Ricci flow. A solution to the Ricci flow is called Ricci soliton if it moves
only by one parameter group of diffeomorphism and scaling. A Ricci solitons $(g,V,\lambda)$ on a Riemannian manifold $(M,g)$ is a generalization
of an Einstein metric such that \cite{HAM2}
\begin{equation}
\label{eqn1.1d}
\pounds_V g+2S+2\lambda g=0,
\end{equation}
where $S$ is Ricci tensor, $\pounds_V$ is the Lie derivative operator along the vector field $V$ on $M$ and $\lambda$ is a real number.
The Ricci soliton is to be shrinking, steady and expanding according as $\lambda$ is negative, zero and positive respectively.\\
\indent During the last two decades, the geometry of Ricci solitons has been the focus of attention of many mathematicians. In particular
it has become more important after Perelman applied Ricci solitons to solve the long standing Poincare conjecture posed in $1904$.
In \cite{SHAR} Sharma studied the Ricci solitons in contact geometry. Thereafter, Ricci solitons in contact metric manifolds have been studied
by various authors such as Bejan and Crasmareanu \cite{BEJAN}, Hui et al. (\cite{CHS}, \cite{SKH2}-\cite{SKH9}), Chen and Deshmukh \cite{CD}, Deshmukh et al. \cite{Detal}, He and Zhu \cite{HE}, Tripathi \cite{TRIP} and many others.\\
\indent Ricci soliton on invariant submanifold of a generalized-Sasakian-space form is studied in section $5$. Finally in the last section, we
obtain some equivalent conditions of such notions.
\section{Preliminaries}
In an almost contact metric manifold, we have \cite{BLAIR}
\begin{align}
\label{eqn2.1}
\phi^2(X) = -X+\eta(X)\xi, \phi \xi=0,
\end{align}
\begin{align}
\label{eqn2.2}
\eta(\xi) = 1, g(X,\xi) = \eta(X), \eta(\phi X) = 0,
\end{align}
\begin{align}
\label{eqn2.3}
g(\phi X,\phi Y) = g(X,Y)-\eta(X)\eta(Y),
\end{align}
\begin{align}
\label{eqn2.4} g(\phi X,Y) = -g(X,\phi Y),
\end{align}
\begin{align}
\label{eqn2.5}
(\overline{\nabla}_X\eta)(Y) = g(\overline{\nabla}_X\xi,Y).
\end{align}
From (\ref{eqn1.3}) we have in a generalized Sasakian-space-form
$\overline{M}^{2n+1}(f_1,f_2,f_3)$,
\begin{align}
\label{eqn2.6}
(\overline{\nabla}_X\phi)(Y) = (f_1-f_3)[g(X,Y)\xi - \eta(Y)X],
\end{align}
\begin{align}
\label{eqn2.7}
\overline{\nabla}_X\xi = -(f_1-f_3) \phi X,
\end{align}
\begin{align}
\label{eqn2.8}
\overline{Q}X = (2nf_1+3f_2-f_3)X-\{3f_2+(2n-1)f_3\}\eta(X)\xi,
\end{align}
\begin{align}
\label{eqn2.9}
\overline{S}(X,Y) = (2nf_1+3f_2-f_3)g(X,Y)-\{3f_2+(2n-1)f_3\}\eta(X)\eta(Y),
\end{align}
\begin{align}
\label{eqn2.10}
\overline{r} = 2n(2n+1)f_1 + 6nf_2 - 4nf_3,
\end{align}
\begin{align}
\label{eqn2.11}
\overline{R}(X,Y)\xi = (f_1 - f_3)\{\eta(Y)X - \eta(X)Y\},
\end{align}
\begin{align}
\label{eqn2.12}
\overline{R}(\xi,X)Y = (f_1 - f_3)\{g(X,Y)\xi - \eta(Y)X\},
\end{align}
\begin{align}
\label{eqn2.13}
\eta(\overline{R}(X,Y)Z) = (f_1 - f_3)\{g(Y,Z)\eta(X) - g(X,Z)\eta(Y)\},
\end{align}
\begin{align}
\label{eqn2.14}
\overline{S}(X,\xi) = 2n(f_1 - f_3)\eta(X),
\end{align}
\begin{align}
\label{eqn2.15}
\overline{S}(\xi,\xi) = 2n(f_1 - f_3)
\end{align}
for all $X$, $Y$, $Z$ on $\overline{M}^{2n+1}(f_{1},f_{2},f_{3})$ and $\overline{\nabla}$
denotes the Levi-Civita connection on $\overline{M}$ and $\overline{S}$ is the Ricci tensor
and $\overline{r}$ is the scalar curvature of $\overline{M}$.

Let $M$ be a submanifold of a generalized Sasakian-space-form
$\overline{M}^{2n+1}(f_1,f_2,f_3)$. Also, let $\nabla$ and $\nabla^\perp$ be the induced
connections on the tangent bundle $TM$ and the normal bundle $T^\perp{M}$ of $M$, respectively.
Then the Gauss and Weingarten formulae are given by
\begin{align}
\label{eqn2.16}
\overline{\nabla}_XY = \nabla_XY +h(X,Y)
\end{align}
and
\begin{align}
\label{eqn2.17}
\overline{\nabla}_XV = -A_VX + \nabla_X^{\perp}V
\end{align}
for all $X,Y\in\Gamma(TM)$ and $V\in\Gamma(T^{\perp}M)$, where $h$ and $A_V$ are second fundamental form and shape operator (corresponding to the normal vector field V), respectively for the immersion of $M$ into $\overline{M}$. The second fundamental form $h$ and the shape operator $A_V$ are related by \cite{YANOKON}
\begin{align}
\label{eqn2.18}
g(h(X,Y),V) = g(A_VX,Y)
\end{align}
for any $X,Y\in\Gamma(TM)$ and $V\in\Gamma(T^{\perp}M)$. If $h=0$, then the submanifold is said to be totally geodesic.
 Also for any smooth function $f$ on a manifold we have
\begin{align}
\label{eqn2.19}
h(fX,Y) = fh(X,Y).
\end{align}
For the second fundamental form, the first and second covariant derivatives of $h$ are defined by
\begin{align}
\label{eqn2.20}
(\overline{\nabla}_Xh)(Y,Z) = {\nabla}^{\perp}_X(h(Y,Z))- h({\nabla}_XY,Z)- h(Y,{\nabla}_XZ)
\end{align}
and
\begin{align}
 \label{eqn2.20a}
  (\overline{\nabla}^{2}h)(Z,W,X,Y)= &= (\overline{\nabla}_{X}\overline{\nabla}_{Y}h)(Z,W) \\
 \nonumber &=\nabla^{\bot}_{X}((\nabla_{Y}h)(Z,W))-(\overline{\nabla}_{Y}h)(\nabla_{X}Z,W)\\
 \nonumber&-(\overline{\nabla}_{X}h)(Z,\nabla_{Y}W)-(\overline{\nabla}_{\nabla_{X}Y}h)(Z,W)
\end{align}
for any vector fields $X$, $Y$, $Z$, $W$ tangent to $M$. Then $\overline{\nabla}h$ is a normal bundle valued tensor
of type $(0,\,3)$ and is called the third fundamental form of $M$, $\overline{\nabla}$ is called the
Vander-Waerden-Bortolotti connection
of $\overline{M}$, i.e. $\overline{\nabla}$ is the connection in $TM\oplus T^\perp M$ built with
 $\nabla$ and $\nabla^\perp$. If $\overline{\nabla}h = 0$,
then $M$ is said to have parallel second fundamental form or the submanifold $M$ is said to be parallel \cite{JD1}.
An immersion is said to be semiparallel if
\begin{align}
\label{eqn2.21}
\overline{R}(X,Y)\cdot{h} = (\overline{\nabla}_X{\overline{\nabla}_Y} - {\overline{\nabla}_Y}{\overline{\nabla}_X} - {\overline{\nabla}_{[X,Y]}})h = 0
\end{align}
holds for all vector fields $X$,$Y$ tangent to $M$ \cite{JD1}, where $\overline{R}$ denotes the
 curvature tensor of the connection $\overline{\nabla}$. Semiparallel immersion have also been studied in \cite{JD2}, \cite{DILLEN}.
In \cite{ARSLAN} Arslan et al. defined and studied submanifolds satisfies the condition
\begin{align}
\label{eqn2.22}
\overline{R}(X,Y)\cdot\overline{\nabla}{h} = 0
\end{align}
for all vector fields $X$, $Y$ tangent to $M$ and such submanifolds are called 2-semiparallel.
In this connection it may be mentioned that $\ddot{\mbox{O}}$zg$\ddot{\mbox{u}}$r and Murathan studied
semiparallel and 2-semiparallel invariant submanifolds of LP-Sasakian manifolds.
From (\ref{eqn2.21}), we get
\begin{align}
\label{eqn2.23}
&(\overline{R}(X,Y)\cdot{h})(Z,U)\\
\nonumber&=  R^\perp(X,Y)h(Z,U) - h(R(X,Y)Z,U) - h(Z,R(X,Y)U)
\end{align}
for all vector fields $X$, $Y$, $Z$ and $U$ where
\begin{align*}
R^\perp(X,Y) = [\nabla_{X}^\perp,\nabla_{Y}^\perp] - \nabla_{[X,Y]}^\perp
\end{align*}
and $\overline{R}$ denotes the curvature tensor of $\overline{\nabla}$. In a similar way, we can write
\begin{align}
\label{eqn2.24}
&(\overline{R}(X,Y)\cdot \overline{\nabla}h)(Z,U,W)\\
\nonumber&= R^\perp(X,Y)(\overline{\nabla}h)(Z,U,W) - (\overline{\nabla}h)(R(X,Y)Z,U,W)\\
\nonumber&-(\overline{\nabla}h)(Z,R(X,Y)U,W)-(\overline{\nabla}h)(Z,U,R(X,Y)W)
\end{align}
for all vector fields $X$, $Y$, $Z$, $U$ and $W$ tangent to $M$ and $(\overline{\nabla}h)(Z,U,W) = (\overline{\nabla}_{Z}h)(U,W)$ \cite{ARSLAN}
and $R$ is the curvature tensor of $M$.

A transformation of a $(2n+1)$-dimensional Riemannian manifold $\overline{M}$,
which transforms every geodesic circle of $\overline{M}$
into a geodesic circle, is called a concircular transformation \cite{YANO}.
The interesting invariant of a concircular transformation is the concircular
curvature tensor $\overline{C}$, which is defined by \cite{YANO}
\begin{align}
\label{eqn2.25}
\overline{C}(X,Y)Z = \overline{R}(X,Y)Z-\frac{\overline{r}}{2n(2n+1)}\big[g(Y,Z)X-g(X,Z)Y\big],
\end{align}
where $\overline{r}$ is the scalar curvature of the manifold.

By virtue of (\ref{eqn2.11}) and (\ref{eqn2.12}) for a generalized Sasakian-space-form $\overline{M}^{2n+1}(f_1,f_2,f_3)$, it follows from (\ref{eqn2.25}) that
\begin{align}
\label{eqn2.26}
\overline{C}(X,Y)\xi = \big[f_1-f_3-\frac{\overline{r}}{2n(2n+1)}\big]\big[\eta(Y)X-\eta(X)Y\big],
\end{align}
\begin{align}
\label{eqn2.27}
\overline{C}(\xi,X)Y =\big[f_1-f_3-\frac{\overline{r}}{2n(2n+1)}\big]\big[g(X,Y)\xi-\eta(Y)X\big].
\end{align}
Also, we have
\begin{align}
\label{eqn2.28}
\left(\overline{C}(X,Y)\cdot{h}\right)(Z,U)&=R^\perp(X,Y)h(Z,U)\\
\nonumber&-h\left(C(X,Y)Z,U\right)- h\left(Z,C(X,Y)U\right),
\end{align}
\begin{align}
\label{eqn2.29}
&\left(\overline{C}(X,Y)\cdot \overline{\nabla}h\right)(Z,U,W) \\
\nonumber&=R^\perp(X,Y)(\overline{\nabla}h)(Z,U,W)-(\overline{\nabla}h)\left(C(X,Y)Z,U,W\right)\\
\nonumber&-(\overline{\nabla}h)\left(Z,C(X,Y)U,W\right)-(\overline{\nabla}h)\left(Z,U,C(X,Y)W\right),
\end{align}
where $C(X,Y)Z$ is the concircular curvature tensor of $M$.

A submanifold $M$ of an almost contact metric manifold $\overline{M}^{2n+1}$ is said to be
totally umbilical if
\begin{align}
\label{eqn2.29a}
  h(X,Y)=g(X,Y)H
\end{align}
for any vectors fields $X,Y\in \Gamma(TM)$, where $H$ is the mean curvature of $M$. Moreover, if $h(X,Y)=0$ for all $X,Y \in \Gamma(TM)$
 then $M$ is said to be {\it{totally geodesic}} and if $H=0$ then $M$ is {\it{minimal}} in $\overline{M}.$

Analogous to almost Hermitian manifolds, the invariant and anti-invariant submanifolds are depend on the behaviour of almost contact metric structure $\phi$.

A submanifold $M$ of an almost contact metric manifold $\overline{M}$ is said to be {\it{invariant}} if
the structure vector field $\xi$ is tangent to $M$ at every point of $M$ and $\phi X$ is tangent to $M$ for any
vector field $X$ tangent to $M$ at every point of $M$, that is $\phi(TM)\subset TM$ at every point of $M$.

On the other hand, $M$ is said to be {\it{anti-invariant}} if for any $X$ tangent to $M$, then $\phi X$ in normal to $M$, i.e., $\phi(TM)\subset T^\perp M$ at every point of $M$, where $T^\perp M$ is the normal bundle of $M$.

There is another class of submanifolds of almost Hermitian manifolds, called slant submanifolds 
introduced by B. Y. Chen \cite{Chen2}. Later, J.L. Cabrerizo et al. \cite{Cab1} defined and studied 
slant submanifolds of almost contact metric manifolds.

For each non-zero vector $X$ tangent to $M$ which is not proportional to $\xi$ at the point $p\in M$, 
we define the angle $\theta(X)$ between $\phi X$ and $TM$. Then $M$ is said to be {\it{slant}} \cite{Cab1}, 
if the angle $\theta(X)$ is constant for all $X\in T_pM-\{\xi_p\}$ and $p$ in $M$ i.e., $\theta(X)$ is 
independent of the choice of the vector field $X$ and the point $p\in M$. The angle $\theta(X)$ is 
called the slant angle. Obviously, if $\theta=0$, then $M$ is invariant and if $\theta=\frac{\pi}{2}$, 
then $M$ is anti-invariant. If $M$ is neither invariant nor anti-invariant, then it is proper slant.

Now, we give the following examples of invariant, anti-invariant and slant submanifolds of almost contact metric manifolds.

\begin{example} \rm{Consider $5$-Euclidean space ${\mathbb{R}^5}$ with the cartesian coordinates $(x_1,\,x_2,\,y_1,\,y_2,\,t)$ and the almost contact structure
\begin{align*}
\phi\left(\frac{\partial}{\partial x_i}\right)=-\frac{\partial}{\partial y_i},~~~~\phi\left(\frac{\partial}{\partial y_j}\right)=\frac{\partial}{\partial x_j},~~~~\phi\left(\frac{\partial}{\partial t}\right)=0,~~~~1\leq i, j\leq2.
\end{align*}
It is easy to show that $(\phi, \xi, \eta, g)$ is an almost contact metric structure on ${\mathbb{R}^5}$ with $\xi=\frac{\partial}{\partial t},\,\eta=dt$ and $g$, the Euclidean metric of ${\mathbb{R}^5}$ (for instance, see \cite{U1,U2}). Let $M$ be a submanifold of ${\mathbb{R}^5}$ defined by the immersion $\psi$ as follows
\begin{align*}
\psi(u,\,v,\,t)=(u+v,\,0,\,u-v,\,0,\,t).
\end{align*}
Then, the tangent bundle $TM$ of $M$ is spanned by the following vector fields
\begin{align*}
Z_1=\frac{\partial}{\partial x_1}+\frac{\partial}{\partial y_1},\,\,\,\,Z_2=\frac{\partial}{\partial x_1}-\frac{\partial}{\partial y_1},\,\,\,\,Z_3=\frac{\partial}{\partial t}.
\end{align*}
Clearly, we find
\begin{align*}
\phi Z_1=-\frac{\partial}{\partial y_1}+\frac{\partial}{\partial x_1},\,\,\,\,\phi Z_2=-\frac{\partial}{\partial y_1}-\frac{\partial}{\partial x_1},\,\,\,\,\phi Z_3=0.
\end{align*}
It is easy to see that $M$ is an invariant submanifold of ${\mathbb{R}^5}$ such that $\xi$ is tangent to $M$.}
\end{example}

\begin{example} \rm{Consider a submanifold $M$ of ${\mathbb{R}^7}$ with the cartesian coordinates $(x_1,\,x_2,\,x_3,\,y_1,\,y_2,\,y_3,\,t)$ and the contact structure
\begin{align*}
\phi\left(\frac{\partial}{\partial x_i}\right)=-\frac{\partial}{\partial y_i},\,\,\,\phi\left(\frac{\partial}{\partial y_j}\right)=\frac{\partial}{\partial x_j},\,\,\,\phi\left(\frac{\partial}{\partial t}\right)=0,\,\,\,1\leq i, j\leq3.
\end{align*}
Then it is easy to check that $(\phi, \xi, \eta, g)$ is an almost contact metric structure on ${\mathbb{R}^7}$ with $\xi=\frac{\partial}{\partial t},\,\eta=dt$ and $g$, the Euclidean metric of ${\mathbb{R}^7}$ (see \cite{U1,U2}). Consider an immersion $\chi$ on ${\mathbb{ R}}^7$ defined by
\begin{align*}
\chi(\theta,\,\psi,\,t)=\left(\cos(\theta+\psi),\,\cos(\theta-\psi),\,\theta+\psi,\,\sin(\theta+\psi),\,\sin(\theta-\psi),\,-\theta-\psi,\,t\right).
\end{align*}
Then the tangent space $TM$ of the submanifold $M$ defined by the immersion $\chi$ is spanned by the following vector fields
\begin{align*}
X_1&=-\sin(\theta+\psi)\frac{\partial}{\partial x_1}-\sin(\theta-\psi)\frac{\partial}{\partial x_2}+\frac{\partial}{\partial x_3}+\cos(\theta+\psi)\frac{\partial}{\partial y_1}\\
&+\cos(\theta-\psi)\frac{\partial}{\partial y_2}-\frac{\partial}{\partial y_3},
\end{align*}
\begin{align*}
X_2&=-\sin(\theta+\psi)\frac{\partial}{\partial x_1}+\sin(\theta-\psi)\frac{\partial}{\partial x_2}+\frac{\partial}{\partial x_3}+\cos(\theta+\psi)\frac{\partial}{\partial y_1}\\
&-\cos(\theta-\psi)\frac{\partial}{\partial y_2}-\frac{\partial}{\partial y_3},\,\,\,\,X_3=\frac{\partial}{\partial t}.
\end{align*}
Thus, we obtain
\begin{align*}
\phi X_1&=\sin(\theta+\psi)\frac{\partial}{\partial y_1}+\sin(\theta-\psi)\frac{\partial}{\partial y_2}-\frac{\partial}{\partial y_3}+\cos(\theta+\psi)\frac{\partial}{\partial x_1}\\
&+\cos(\theta-\psi)\frac{\partial}{\partial x_2}-\frac{\partial}{\partial x_3},
\end{align*}
\begin{align*}
\phi X_2&=\sin(\theta+\psi)\frac{\partial}{\partial y_1}-\sin(\theta-\psi)\frac{\partial}{\partial y_2}-\frac{\partial}{\partial y_3}+\cos(\theta+\psi)\frac{\partial}{\partial x_1}\\
&-\cos(\theta-\psi)\frac{\partial}{\partial x_2}-\frac{\partial}{\partial x_3},\,\,\,\,\phi X_3=0.
\end{align*}
It is clear that $\phi X_1$ and $\phi X_2$ are orthogonal to $TM$ and hence $M$ is an anti-invariant submanifold of ${\mathbb{R}^7}$ such that $X_3=\xi$ is tangent to $M$.}
\end{example}

\begin{example} \rm{Let $M$ be a submanifold of ${\mathbb{R}^7}$ with an almost contact structure defined in Example 2.2. Consider the immersion $\psi$ defined as
\begin{align*}
\psi(u,\,v,\,t)=\left(\sin u,\,\sin v,\,u+v,\,\cos u,\,\cos v,\,u-v,\,t\right).
\end{align*}
If, we put
\begin{align*}
U_1=\cos u\frac{\partial}{\partial x_1}+\frac{\partial}{\partial x_3}-\sin u\frac{\partial}{\partial y_1}+\frac{\partial}{\partial y_3},
\end{align*}
\begin{align*}
U_2=\cos v\frac{\partial}{\partial x_2}+\frac{\partial}{\partial x_3}-\sin v\frac{\partial}{\partial y_2}-\frac{\partial}{\partial y_3};\,\,\,\,U_3=\frac{\partial}{\partial t},
\end{align*}
then the restriction of $\{U_1,U_2,U_3\}$ to $M$ forms an orthogonal frame fields of the tangent bundle $TM$. Clearly, we have
\begin{align*}
\phi U_1=-\cos u\frac{\partial}{\partial y_1}-\frac{\partial}{\partial y_3}-\sin u\frac{\partial}{\partial x_1}+\frac{\partial}{\partial x_3},
\end{align*}
\begin{align*}
\phi U_2=-\cos v\frac{\partial}{\partial y_2}-\frac{\partial}{\partial y_3}-\sin v\frac{\partial}{\partial x_2}-\frac{\partial}{\partial x_3};\,\,\,\,\phi U_3=0.
\end{align*}
Then $M$ is a proper slant submanifold of ${\mathbb{R}^7}$ with slant angle $\theta=\cos^{-1}\left(\frac{2}{3}\right)$ such that $U_3=\xi$ is tangent to $M$.}
\end{example}

\begin{example} \rm{Consider a submanifold $M$ of ${\mathbb{R}^5}$ with an almost contact structure defined in Example 2.1. Let us define the immersion of $M$ as follows
\begin{align*}
\chi(u,\,v,\,t)=(u,\,u+v,\,v,\,u-v,\,t).
\end{align*}
Clearly, we have
\begin{align*}
U_1=\frac{\partial}{\partial x_1}+\frac{\partial}{\partial x_2}+\frac{\partial}{\partial y_2},\,\,\,\,U_2=\frac{\partial}{\partial x_2}+\frac{\partial}{\partial y_1}-\frac{\partial}{\partial y_2},\,\,\,\,U_3=\frac{\partial}{\partial t}.
\end{align*}
Then, we get
\begin{align*}
\phi U_1=-\frac{\partial}{\partial y_1}-\frac{\partial}{\partial y_2}+\frac{\partial}{\partial x_2},\,\,\,\,\phi U_2=\frac{\partial}{\partial x_1}-\frac{\partial}{\partial y_2}-\frac{\partial}{\partial x_2},\,\,\,\,\phi U_3=0.
\end{align*}
Thus $M$ is a slant submanifold of ${\mathbb{R}^5}$ with slant angle $\theta=\cos^{-1}\left(\frac{1}{3}\right)$ such that $\xi$ is tangent to $M$.}
\end{example}

We can construct many more examples of such submanifolds. For more examples of slant submanifolds of almost Hermitian manifolds and almost contact metric manifolds we refer to B. Y. Chen' book \cite{Chen2} and \cite{Cab1}.

Next, from the Gauss and Weingarten formulas, we obtain
\begin{align}
\label{eqn2.30}
\overline{R}(X,Y)Z = R(X,Y)Z + A_{h(X,Z)}Y - A_{h(Y,Z)}X,
\end{align}
where ${R}(X,Y)Z$ denotes the tangential part of the curvature tensor of the submanifold.\\
On an invariant submanifold of a generalized Sasakian-space-form $\overline{M}$, we have
\begin{align}
\label{eqn2.31}
h(X,\xi) = 0.
\end{align}
Now, we have
\begin{proposition}
Let $M$ be an invariant submanifold of a generalized Sasakian-space-form $\overline{M}$. Then the following relations hold:
\begin{align}
\label{eqn2.32}
\nabla_{X}\xi = -(f_1-f_3)\phi X,
\end{align}
\begin{align}
\label{eqn2.33}
R(X,Y)\xi=(f_1-f_3)\big[\eta(Y)X-\eta(X)Y\big],
\end{align}
\begin{align}
\label{eqn2.34}
S(X,\xi)=2n(f_1-f_3)\eta(X),
\end{align}
\begin{align}
\label{eqn2.35}
(\nabla_{X}\phi)(Y)=(f_1-f_3)\big[g(X,Y)\xi-\eta(Y)X\big],
\end{align}
\begin{align}
\label{eqn2.36}
h(X,\phi Y)= \phi h(X,Y).
\end{align}
\end{proposition}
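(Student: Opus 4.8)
The plan is to obtain all five identities by splitting the known ambient relations \eqref{eqn2.6}, \eqref{eqn2.7} and \eqref{eqn2.11} into their tangential and normal parts via the Gauss and Weingarten formulae \eqref{eqn2.16}--\eqref{eqn2.17}, using throughout the invariance hypothesis that $\xi\in\Gamma(TM)$ and that $\phi X\in\Gamma(TM)$ whenever $X\in\Gamma(TM)$. Before doing so I would record one structural fact: on an invariant submanifold $\phi$ also preserves the normal bundle. Indeed, for $V\in\Gamma(T^{\perp}M)$ and arbitrary $X\in\Gamma(TM)$, equation \eqref{eqn2.4} gives $g(\phi V,X)=-g(V,\phi X)=0$ because $\phi X\in\Gamma(TM)$; hence $\phi V\in\Gamma(T^{\perp}M)$. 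Together with $h(X,\xi)=0$ from \eqref{eqn2.31}, this is what makes the tangential/normal separation of the ambient identities unambiguous.

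For \eqref{eqn2.32} I would decompose \eqref{eqn2.7}: since $\xi$ is tangent, the Gauss formula gives $\overline{\nabla}_X\xi=\nabla_X\xi+h(X,\xi)$, while the right-hand side $-(f_1-f_3)\phi X$ of \eqref{eqn2.7} is tangent by invariance; comparing tangential parts yields \eqref{eqn2.32} (and the normal parts merely reproduce \eqref{eqn2.31}). The identities \eqref{eqn2.35} and \eqref{eqn2.36} then come out together from \eqref{eqn2.6}. Writing $(\overline{\nabla}_X\phi)(Y)=\overline{\nabla}_X(\phi Y)-\phi(\overline{\nabla}_XY)$ and inserting the Gauss formula into each term (legitimate since $\phi Y$ is tangent) gives
\[
(\overline{\nabla}_X\phi)(Y)=(\nabla_X\phi)(Y)+h(X,\phi Y)-\phi\,h(X,Y),
\]
in which $(\nabla_X\phi)(Y)$ is tangent while $h(X,\phi Y)$ and $\phi\,h(X,Y)$ are normal, the latter precisely by the preliminary observation. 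Since the right-hand side of \eqref{eqn2.6} is tangent, comparing tangential components yields \eqref{eqn2.35}, and comparing normal components forces $h(X,\phi Y)=\phi\,h(X,Y)$, which is \eqref{eqn2.36}.

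For \eqref{eqn2.33} I would put $Z=\xi$ in the Gauss equation \eqref{eqn2.30}; because $h(X,\xi)=h(Y,\xi)=0$ the two shape-operator terms vanish, so $\overline{R}(X,Y)\xi=R(X,Y)\xi$, and \eqref{eqn2.11} then gives \eqref{eqn2.33}. Finally, \eqref{eqn2.34} follows by contracting \eqref{eqn2.33}: choosing an orthonormal frame $\{e_i\}$ of $TM$ and using $g(e_i,e_i)=1$ together with $\sum_i\eta(e_i)g(X,e_i)=\eta(X)$, the trace of the map $Z\mapsto R(Z,X)\xi$ collapses to \eqref{eqn2.34}. The only step that is not purely mechanical is the structural observation that $\phi$ maps the normal bundle into itself, which is what allows $\phi\,h(X,Y)$ to be recognized as a normal field when separating the parts of \eqref{eqn2.6}; once that is in hand, everything else is routine tangential/normal bookkeeping on the ambient identities.
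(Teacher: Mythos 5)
Your argument follows the paper's proof almost verbatim: \eqref{eqn2.32} from splitting \eqref{eqn2.7} via the Gauss formula and \eqref{eqn2.31}; \eqref{eqn2.35} and \eqref{eqn2.36} from the tangential/normal decomposition of $(\overline{\nabla}_X\phi)(Y)=(\nabla_X\phi)(Y)+h(X,\phi Y)-\phi h(X,Y)$ compared against \eqref{eqn2.6}; and \eqref{eqn2.33} from setting $Z=\xi$ in \eqref{eqn2.30}. Your preliminary observation that $\phi$ preserves the normal bundle is a genuine improvement in rigor: the paper silently treats $\phi h(X,Y)$ as normal when separating components, and your two-line argument via \eqref{eqn2.4} is exactly what justifies that step. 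The one place you diverge is \eqref{eqn2.34}: the paper simply invokes the ambient identity \eqref{eqn2.14}, whereas you contract the induced identity \eqref{eqn2.33} over an orthonormal frame of $TM$. Your contraction is the honest computation, but note what it actually yields: $S(X,\xi)=(\dim M-1)(f_1-f_3)\eta(X)$, since $\sum_i g(e_i,e_i)=\dim M$ and $\sum_i\eta(e_i)g(X,e_i)=\eta(X)$. This equals the stated $2n(f_1-f_3)\eta(X)$ only when $\dim M=2n+1$; for a proper invariant submanifold of dimension $2m+1<2n+1$ (such as the $3$-dimensional example inside $\mathbb{R}^5$ given in Section 2) the correct coefficient is $2m$, not $2n$. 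So your step ``the trace collapses to \eqref{eqn2.34}'' is not literally valid in general --- though the defect originates in the statement itself (and in the paper's own appeal to \eqref{eqn2.14}), not in your method; you should either state the conclusion with $\dim M-1$ in place of $2n$ or add the hypothesis that $M$ has the same dimension as $\overline{M}$.
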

\begin{proof} Since $M$ is an invariant submanifold of a generalized Sasakian-space-form $\overline{M}$, then by virtue of \eqref{eqn2.7}, (\ref{eqn2.16}), (\ref{eqn2.31})  we get (\ref{eqn2.32}). Also, from the covariant derivative formula for $\phi$ and (\ref{eqn2.16}), we derive
\begin{align}
\label{eqn2.39}
(\overline{\nabla}_{X}\phi)(Y) = (\nabla_{X}\phi)(Y) + h(X,\phi Y) - \phi h(X,Y).
\end{align}
Comparing the tangential and normal parts of (\ref{eqn2.39}) with (\ref{eqn2.6}), we
get the relations (\ref{eqn2.35}) and (\ref{eqn2.36}). Also, from (\ref{eqn2.30}), we get
\begin{align}
\label{eqn2.40}
\overline{R}(X,Y)\xi = R(X,Y)\xi + A_{h(X,\xi)}Y - A_{h(Y,\xi)}X.
\end{align}
Using \eqref{eqn2.11}, \eqref{eqn2.14} and (\ref{eqn2.31}) in (\ref{eqn2.40}), we get the relation (\ref{eqn2.33}) and (\ref{eqn2.34}).
\end{proof}

Let $\overline{M}^{2n+1}(f_{1},f_{2},f_{3})$ be a generalized Sasakian-space-form and $\overline{\nabla}$ be the Levi-Civita
connection on $\overline{M}$. A linear connection $\widetilde{\overline{\nabla}}$ on $\overline{M}^{2n+1}(f_{1},f_{2},f_{3})$
is said to be {\it{semi-symmetric}} if the torsion tensor $\tau$ of the connection $\widetilde{\overline{\nabla}}$ is given by $$\tau(X,Y)=\widetilde{\overline{\nabla}}_{X}Y-\widetilde{\overline{\nabla}}_{Y}X-[X,Y]$$
satisfies $$\tau(X,Y)=\eta(Y)X-\eta(X)Y$$
for all $X,Y \in \Gamma(TM)$. A semi-symmetric connection $\widetilde{\overline{\nabla}}$ is called {\it{semi-symmetric metric connection}}
if it further satisfies $$\widetilde{\overline{\nabla}}g=0.$$
The relation between the semi-symmetric metric connection $\widetilde{\overline{\nabla}}$ and the Riemannian connection $\overline{\nabla}$
of a generalized Sasakian-space-form $\widetilde{\overline{M}}^{2n+1}(f_{1},f_{2},f_{3})$ is given by \cite{SHAIKH1}
\begin{align}
\label{eqn2.41}
 \widetilde{\overline{\nabla}}_{X}Y= \overline{\nabla}_X Y+\eta(Y)X-g(X,Y)\xi.
\end{align}
 If $\overline{R}$ and $\widetilde{\overline{R}}$ are respectively the Riemannian Curvature tensor of generalized Sasakian-space-form $\overline{M}^{2n+1}(f_{1},f_{2},f_{3})$ with respect to Levi-Civita connection and semi-symmetric metric connection, then we have
\begin{align}
\label{eqn2.42}
\widetilde{\overline{R}}(X,Y)Z &= \overline{R}(X,Y)Z-\alpha(Y,Z)X+\alpha(X,Z)Y\\
\nonumber &+ g(Y,Z)LX +g(X,Z)LY,
\end{align}
where $\alpha$ is a $(0,\,2)$ tensor field given by
\begin{align}
\label{eqn2.43}
  \alpha(X,Y)=(\widetilde{\overline{\nabla}}_{X}\eta)(Y)+\frac{1}{2}g(X,Y),
\end{align}
\begin{align*}
LX=\widetilde{\overline{\nabla}}_{X}\xi+\frac{1}{2}X
\end{align*}
and
\begin{align*}
g(LX,Y)=\alpha(X,Y).
\end{align*}
From (\ref{eqn2.42}), we get
\begin{align}
\label{eqn2.44}
\widetilde{\overline{S}}(X,Y)=\overline{S}(X,Y)-(2n-1)\alpha(X,Y)-ag(X,Y)
\end{align}
and
\begin{align}
\label{eqn2.45}
\widetilde{\overline{r}}=\overline{r}-4na,
\end{align}
where $a=\rm{trace}(\alpha)$, $\widetilde{\overline{S}}$ and $\widetilde{\overline{r}}$ are the Ricci tensor and scalar curvature with respect to
semi-symmetric metric connection $\widetilde{\overline{\nabla}}$ and $\overline{S}$ and $\overline{r}$ are the Ricci tensor and scalar curvature
of $\overline{M}^{2n+1}(f_1,f_2,f_3)$ with respect to Levi-Civita connection, respectively.\\
From (\ref{eqn2.42}) and (\ref{eqn2.44}), we get
\begin{align}
\label{eqn2.46}
  \widetilde{\overline{R}}(X,Y)\xi &= (f_{1}-f_{3})[\eta(Y)X-\eta(X)Y]\\
\nonumber &- \eta(Y)LX+\eta(X)LY,
\end{align}
\begin{align}
\label{eqn2.47}
  \widetilde{\overline{R}}(\xi,X)Y &= (f_{1}-f_{3})[g(X,Y)\xi-\eta(Y)X]\\
  \nonumber &- \alpha(X,Y)\xi+\eta(Y)LX,
\end{align}
\begin{align}
\label{2.48}
  \widetilde{\overline{S}}(X,\xi) = [2n(f_{1}-f_{3})-a]\eta(X)
\end{align}
for arbitrary vectors fields $X,Y$ and $Z$ on $\overline M$.\\
\indent As a generalization of quasi-Einstein (or $\eta$-Einstein) manifolds, recently Shaikh \cite{SHAIKH} introduced
the notion of pseudo quasi-Einstein (or pseudo $\eta$-Einstein) manifolds. A generalized Sasakian-space form is said to be
pseudo quasi-Einstein (or pseudo $\eta$-Einstein) manifold if its Ricci tensor $S$ of the type (0,2) is not identically
zero and satisfies the following:
\begin{equation}
\label{eqn2.49}
S(X,Y)=pg(X,Y)+q\eta(X)\eta(Y)+sD(X,Y),
\end{equation}
where $p,q,s$ are scalars of which $q\neq0$, $s\neq0$ and $D(X,\xi)=0$ for
any vector field $X$. It may be noted that every quasi-Einstein (or $\eta$-Einstein) manifold is a pseudo quasi-Einstein
(or pseudo $\eta$-Einstein) manifold but not conversely as follows by various examples given in \cite{SHAIKH}.
\section{invariant submanifolds of generalized sasakian-space-forms}
In this section, we study parallel, semiparallel and $2$-semiparallel invariant submanifolds of generalized Sasakian-space-forms. First, we prove the following:
\begin{theorem}
Let $M$ be an invariant submanifold of a generalized Sasakian-space-form $\overline{M}^{2n+1}(f_{1},f_{2},f_{3})$ such that $f_{1}\neq{f_{3}}$. Then
$M$ is totally geodesic if and only if its second fundamental form is parallel.
\end{theorem}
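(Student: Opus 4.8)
The plan is to establish the nontrivial implication, that a parallel second fundamental form forces total geodesy, since the converse is immediate: if $h=0$ then every term on the right-hand side of the defining formula \eqref{eqn2.20} for $\overline{\nabla}h$ vanishes identically, so $\overline{\nabla}h=0$ at once. The whole weight of the statement thus lies in the forward direction, and this is where the hypothesis $f_1\neq f_3$ must be used.

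For that direction I would assume $\overline{\nabla}h=0$ and feed the structure vector field $\xi$ into the covariant derivative of $h$, exploiting the two facts special to invariant submanifolds: $h(X,\xi)=0$ from \eqref{eqn2.31}, and $\nabla_X\xi=-(f_1-f_3)\phi X$ from \eqref{eqn2.32}. Specializing \eqref{eqn2.20} by setting $Z=\xi$ gives
\[
(\overline{\nabla}_Xh)(Y,\xi)=\nabla^\perp_X\big(h(Y,\xi)\big)-h(\nabla_XY,\xi)-h(Y,\nabla_X\xi).
\]
The first two terms on the right vanish because $h(\,\cdot\,,\xi)=0$, while the last term, after substituting \eqref{eqn2.32} and using the scalar-linearity \eqref{eqn2.19}, becomes $(f_1-f_3)\,h(Y,\phi X)$. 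Hence
\[
(\overline{\nabla}_Xh)(Y,\xi)=(f_1-f_3)\,h(Y,\phi X).
\]

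Setting the left-hand side to zero by the parallel hypothesis and invoking $f_1\neq f_3$, I obtain $h(Y,\phi X)=0$ for all $X,Y$ tangent to $M$. The last step is to remove the $\phi$: replacing $X$ by $\phi X$ and applying $\phi^2X=-X+\eta(X)\xi$ from \eqref{eqn2.1} yields $h(Y,\phi^2X)=-h(X,Y)+\eta(X)h(Y,\xi)=-h(X,Y)$, again using $h(Y,\xi)=0$. Since the left-hand side equals $h\big(Y,\phi(\phi X)\big)=0$, I conclude $h(X,Y)=0$, i.e. $M$ is totally geodesic.

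I expect the only genuine content to be the single computation identifying $(\overline{\nabla}_Xh)(Y,\xi)$ with $(f_1-f_3)\,h(Y,\phi X)$; this is where both the invariance of $M$ and the condition $f_1\neq f_3$ enter, and everything else is routine bookkeeping with the almost contact identities of Section~2. I do not anticipate any substantial obstacle beyond correctly tracking the vanishing $\xi$-terms.
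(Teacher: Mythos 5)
Your proposal is correct and follows essentially the same route as the paper: specialize $(\overline{\nabla}_Xh)(Y,\xi)$, use $h(\cdot,\xi)=0$ and $\nabla_X\xi=-(f_1-f_3)\phi X$ to reduce the parallelism hypothesis to $(f_1-f_3)h(Y,\phi X)=0$, and conclude from $f_1\neq f_3$. You are in fact slightly more careful than the paper in the last step, where you explicitly replace $X$ by $\phi X$ and use $\phi^2X=-X+\eta(X)\xi$ to pass from $h(Y,\phi X)=0$ to $h=0$, a step the paper leaves implicit.
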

\begin{proof} Let $M$ be an invariant submanifold of a generalized Sasakian-space-form $\overline{M}$ such that $f_{1}\neq f_{3}$. Since $h$ is parallel, we have $(\nabla_{X} h)(Y,Z)=0$, which implies
\begin{align}\label{eqn3.1}
\nabla^{\bot}_{X}h(Y,Z)-h(\nabla_{X}Y,Z)-h(Y,\nabla_{X}Z)=0
\end{align}
Putting $Z=\xi$ in (\ref{eqn3.1}) and using (\ref{eqn2.31}), we get
\begin{align}\label{eqn3.2}
h(Y,\nabla_{X}\xi)=0
\end{align}
Using (\ref{eqn2.32}), we arrive at
\begin{align}\label{eqn3.3}
(f_{1}-f_{3})h(Y,\phi X)=0.
\end{align}
Since $f_{1}\neq f_{3}$, thus from above relation we get $h=0$, that is $M$ is totally geodesic.
The converse part is trivial and consequently, we get the desired result.
\end{proof}
\begin{corollary}
An invariant submanifold of a Sasakian-space-form is totally geodesic if and only if its second fundamental form is parallel.
\end{corollary}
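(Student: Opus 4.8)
The plan is to obtain this corollary as an immediate specialization of Theorem 3.1, so almost all of the work has already been done. The key observation is that a Sasakian-space-form is precisely a generalized Sasakian-space-form $\overline{M}^{2n+1}(f_1,f_2,f_3)$ for the particular choice of functions
\begin{align*}
f_1 = \frac{c+3}{4}, \qquad f_2 = f_3 = \frac{c-1}{4},
\end{align*}
as is evident upon comparing the curvature expressions \eqref{eqn1.2} and \eqref{eqn1.3}. Thus, once I have recorded this identification, an invariant submanifold of a Sasakian-space-form is in particular an invariant submanifold of a generalized Sasakian-space-form, and Theorem 3.1 applies verbatim provided its hypothesis $f_1 \neq f_3$ is met.

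The only point that genuinely needs checking—and it is the step I would carry out explicitly—is the nonvanishing of $f_1 - f_3$ for a Sasakian-space-form. Substituting the values above gives
\begin{align*}
f_1 - f_3 = \frac{c+3}{4} - \frac{c-1}{4} = 1,
\end{align*}
which is nonzero regardless of the value of the $\phi$-sectional curvature $c$. Hence the hypothesis $f_1 \neq f_3$ of Theorem 3.1 is automatically satisfied on any Sasakian-space-form, so the theorem yields the stated equivalence without any further restriction.

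I do not expect any substantive obstacle here, since the corollary is a pure specialization: the real content lives in Theorem 3.1, whose argument runs through \eqref{eqn2.31}, \eqref{eqn2.32} and the parallelism of $h$ to force $(f_1-f_3)\,h(Y,\phi X)=0$ and thereby $h=0$. The entire role of the corollary is to observe that the generic-looking nondegeneracy condition $f_1 \neq f_3$ collapses to the identity $f_1 - f_3 = 1$ in the Sasakian case, so no hypothesis at all is required in the conclusion. I would therefore present the proof as a one-line invocation of Theorem 3.1 together with the above computation of $f_1 - f_3$.
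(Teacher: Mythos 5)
Your proposal is correct and matches the paper's (implicit) reasoning exactly: the corollary is stated as an immediate consequence of Theorem 3.1, relying on the identification $f_1=\frac{c+3}{4}$, $f_2=f_3=\frac{c-1}{4}$ and the resulting computation $f_1-f_3=1\neq 0$. Your explicit verification of the hypothesis $f_1\neq f_3$ is precisely the one step the paper leaves to the reader.
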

\begin{theorem}
Let $M$ be an invariant submanifold of a generalized Sasakian-space-form
$\overline{M}^{2n+1}(f_1,f_2,f_3)$ with $f_1 \neq f_3$ and $r\neq 2n(2n+1)(f_1-f_3)$. Then $M$ is totally geodesic
if and only if $M$ is concircularly semiparallel.
\end{theorem}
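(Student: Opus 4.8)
The plan is to prove the nontrivial implication, namely that concircular semiparallelism forces $h=0$; the converse is immediate, since if $M$ is totally geodesic then $h\equiv 0$ and every term on the right-hand side of \eqref{eqn2.28} vanishes identically, so that $\overline{C}(X,Y)\cdot h=0$ holds trivially. Thus I would assume $\left(\overline{C}(X,Y)\cdot h\right)(Z,U)=0$ for all vector fields $X,Y,Z,U$ tangent to $M$ and aim to deduce $h=0$, exploiting the hypotheses $f_1\neq f_3$ and $r\neq 2n(2n+1)(f_1-f_3)$.

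First I would specialize \eqref{eqn2.28} by setting $U=\xi$. On an invariant submanifold we have $h(Z,\xi)=0$ by \eqref{eqn2.31}, so the term $R^\perp(X,Y)h(Z,\xi)$ drops out, and likewise $h\!\left(C(X,Y)Z,\xi\right)=0$ because one of its arguments is $\xi$. This collapses the identity to $h\!\left(Z,C(X,Y)\xi\right)=0$ for all $X,Y,Z$ tangent to $M$.

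The next step is to evaluate the intrinsic concircular curvature $C(X,Y)\xi$ of $M$. Using the submanifold analogue of \eqref{eqn2.25} together with the relation \eqref{eqn2.33} for the induced curvature supplied by Proposition~2.1, I would compute
\begin{align*}
C(X,Y)\xi=\Big[(f_1-f_3)-\frac{r}{2n(2n+1)}\Big]\big[\eta(Y)X-\eta(X)Y\big].
\end{align*}
Substituting this back yields
\begin{align*}
\Big[(f_1-f_3)-\frac{r}{2n(2n+1)}\Big]\big[\eta(Y)h(Z,X)-\eta(X)h(Z,Y)\big]=0.
\end{align*}
Setting $Y=\xi$ and using $\eta(\xi)=1$ together with $h(Z,\xi)=0$ reduces this to $\Big[(f_1-f_3)-\frac{r}{2n(2n+1)}\Big]h(Z,X)=0$. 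Since the hypothesis $r\neq 2n(2n+1)(f_1-f_3)$ is exactly the assertion that the scalar factor is nonzero, I conclude $h(Z,X)=0$ for all $Z,X$, that is, $M$ is totally geodesic.

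The main obstacle I anticipate is establishing the formula for $C(X,Y)\xi$ cleanly: one must be careful to use the relation \eqref{eqn2.33} for the \emph{induced} curvature of $M$ rather than the ambient relation, and to keep the normalization $\tfrac{r}{2n(2n+1)}$ in the concircular tensor consistent. Everything else is routine once $U=\xi$ is inserted and \eqref{eqn2.31} is applied; the hypothesis $f_1\neq f_3$ enters implicitly in guaranteeing that \eqref{eqn2.33} has the stated form, while the condition on $r$ is precisely what prevents the scalar coefficient from vanishing in the final step.
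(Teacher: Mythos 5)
Your proposal is correct and follows essentially the same route as the paper: the paper sets $X=U=\xi$ in \eqref{eqn2.28} to reduce the hypothesis to $h\bigl(Z,C(\xi,Y)\xi\bigr)=0$ and then evaluates $C(\xi,Y)\xi$ via \eqref{eqn2.26} and \eqref{eqn2.31}, whereas you set $U=\xi$ first and insert $Y=\xi$ at the end --- an immaterial reordering of the same specializations. (One small remark: the hypothesis $f_1\neq f_3$ is not actually what makes \eqref{eqn2.33} hold --- that identity is valid for any invariant submanifold --- and in fact only $r\neq 2n(2n+1)(f_1-f_3)$ is used to kill the scalar factor, exactly as in the paper's own argument.)
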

\begin{proof} Let $M$ be an invariant submanifold of a generalized Sasakian-space-form
$\overline{M}^{2n+1}(f_1,f_2,f_3)$ such that $r\neq 2n(2n+1)(f_1-f_3)$.
Let $M$ be concircularly semiparallel, i.e., $M$ satisfies the relation $\overline{C}(X,Y)\cdot h = 0$.
Then from (\ref{eqn2.28}) we get
\begin{align}
\label{eqn3.4}
R^\perp(X,Y)h(Z,U)- h\left(C(X,Y)Z,U\right)- h\left(Z,C(X,Y)U\right) = 0.
\end{align}
Setting $X = U = \xi$ in (\ref{eqn3.4}) and using (\ref{eqn2.31}) we obtain
\begin{align}
\label{eqn3.5}
h\left(Z,C(\xi,Y)\xi\right) = 0.
\end{align}
By virtue of (\ref{eqn2.26}) and (\ref{eqn2.31}) it follows from (\ref{eqn3.5}) that
$\big[f_1-f_3-\frac {r}{2n(2n+1)}\big]h(Z,Y) = 0$, which gives $h(Z,Y) = 0$,
since $f_1 \neq f_3$ and $r\neq 2n(2n+1)(f_1-f_3)$ and hence the submanifold $M$ is totally geodesic.
Converse is trivial and hence the proof is complete.
\end{proof}
\begin{corollary}
An invariant submanifold of a Sasakian-space-form is totally geodesic if and only if its second fundamental form
is concircularly semiparallel with $r\neq 2n(2n+1)$.
\end{corollary}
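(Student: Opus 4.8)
The plan is to obtain the corollary as a direct specialization of Theorem 3.2, exploiting the fact that every Sasakian-space-form is a generalized Sasakian-space-form for a particular choice of the defining functions. Comparing the Sasakian-space-form curvature tensor \eqref{eqn1.2} with the generalized one \eqref{eqn1.3}, we see that a Sasakian-space-form of constant $\phi$-sectional curvature $c$ is exactly the generalized Sasakian-space-form $\overline{M}^{2n+1}(f_1,f_2,f_3)$ with $f_1=\frac{c+3}{4}$ and $f_2=f_3=\frac{c-1}{4}$. First I would compute the quantity $f_1-f_3$ under this choice, namely $f_1-f_3=\frac{c+3}{4}-\frac{c-1}{4}=1$.

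With this value in hand, I would check the two hypotheses of Theorem 3.2 in the present setting. The first hypothesis $f_1\neq f_3$ becomes $1\neq 0$ and is therefore automatically satisfied, so it need not be imposed separately. The second hypothesis $r\neq 2n(2n+1)(f_1-f_3)$ reduces, upon substituting $f_1-f_3=1$, to the single condition $r\neq 2n(2n+1)$, which is precisely the condition appearing in the statement of the corollary.

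Since both hypotheses of Theorem 3.2 are met (the first vacuously, the second as the assumed $r\neq 2n(2n+1)$), its conclusion transfers verbatim: $M$ is totally geodesic if and only if it is concircularly semiparallel. I do not anticipate a genuine obstacle here, as the argument is purely a substitution of the constant values of $f_1,f_2,f_3$ into an already established theorem. The only point deserving a line of care is to note that the concircular curvature identities and the relation $\bigl(\overline{C}(X,Y)\cdot h\bigr)(Z,U)=R^\perp(X,Y)h(Z,U)-h(C(X,Y)Z,U)-h(Z,C(X,Y)U)$ used inside the proof of Theorem 3.2 hold for an arbitrary generalized Sasakian-space-form, hence a fortiori for a Sasakian-space-form, so nothing in the underlying computation is lost under the specialization.
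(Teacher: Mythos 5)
Your proposal is correct and is precisely the argument the paper intends: the corollary is stated without proof as an immediate specialization of Theorem 3.2 to the Sasakian-space-form case $f_1=\frac{c+3}{4}$, $f_2=f_3=\frac{c-1}{4}$, where $f_1-f_3=1$ makes the first hypothesis automatic and reduces the second to $r\neq 2n(2n+1)$. Nothing is missing.
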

Also we have from Theorem 3.2 that
\begin{corollary}
Let $M$ be an invariant submanifold  of a generalized Sasakian-space-form $\overline{M}^{2n+1}(f_{1},f_{2},f_{3})$ with $f_{1}\neq f_{3}$. Then $M$ is totally geodesic if and only if $M$ is semiparallel.
\end{corollary}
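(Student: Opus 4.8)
The plan is to reprise the argument used for Theorem 3.2, but with the Riemann curvature tensor $\overline{R}$ in place of the concircular tensor $\overline{C}$. The payoff of this substitution is structural: the identity (\ref{eqn2.33}) governing $R(X,Y)\xi$ on an invariant submanifold carries no scalar-curvature term, whereas its concircular counterpart (\ref{eqn2.26}) does. Consequently the single hypothesis $f_{1}\neq f_{3}$ will suffice here, and the auxiliary condition $r\neq 2n(2n+1)(f_{1}-f_{3})$ required in Theorem 3.2 is not needed.

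First I would assume $M$ is semiparallel, i.e. $\overline{R}(X,Y)\cdot h=0$ as in (\ref{eqn2.21}), and expand this using (\ref{eqn2.23}) to obtain
\begin{align*}
R^{\perp}(X,Y)h(Z,U)-h(R(X,Y)Z,U)-h(Z,R(X,Y)U)=0
\end{align*}
for all vector fields $X,Y,Z,U$ tangent to $M$. The decisive step is to set $X=U=\xi$ and invoke (\ref{eqn2.31}), which states $h(\,\cdot\,,\xi)=0$: this kills the first term (since $h(Z,\xi)=0$) and the second term (since its second slot becomes $h(\,\cdot\,,\xi)$), leaving only $h(Z,R(\xi,Y)\xi)=0$. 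I would then compute $R(\xi,Y)\xi$ by putting $X=\xi$ in (\ref{eqn2.33}), which gives $R(\xi,Y)\xi=(f_{1}-f_{3})\big[\eta(Y)\xi-Y\big]$. Substituting this and using $h(Z,\xi)=0$ once more yields $(f_{1}-f_{3})h(Z,Y)=0$. Since $f_{1}\neq f_{3}$, we conclude $h=0$, so $M$ is totally geodesic. The converse is immediate, because a totally geodesic submanifold has $h\equiv 0$, whence $\overline{R}(X,Y)\cdot h=0$ trivially.

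There is no genuine obstacle here; the result is a direct specialization of the technique behind Theorem 3.2. If any step merits care it is the substitution $X=U=\xi$, where one must check that the symmetry of $h$ together with $h(\,\cdot\,,\xi)=0$ does indeed eliminate every term except $h(Z,R(\xi,Y)\xi)$, so that the surviving equation isolates $h(Z,Y)$ after the curvature term is evaluated.
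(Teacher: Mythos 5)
Your proof is correct and follows essentially the paper's intended route: the paper obtains this corollary "from Theorem 3.2," meaning by the same substitution $X=U=\xi$ in the semiparallelity condition, with $\overline{R}$ and the identity (\ref{eqn2.33}) playing the roles of $\overline{C}$ and (\ref{eqn2.26}), so that the factor $f_1-f_3-\frac{r}{2n(2n+1)}$ is replaced by $f_1-f_3$ and the scalar-curvature hypothesis drops out. Your observation about why the condition $r\neq 2n(2n+1)(f_1-f_3)$ is not needed here is exactly the point.
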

\begin{corollary}
An invariant submanifold of a Sasakian-space-form is totally geodesic if and only if its second fundamental form is semiparallel.
\end{corollary}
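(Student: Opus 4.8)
The plan is to derive this statement directly from Corollary 3.3, so that the only real task is to verify that the standing hypothesis $f_1 \neq f_3$ of that corollary is automatically satisfied on a Sasakian-space-form. Recall that a Sasakian-space-form is exactly the generalized Sasakian-space-form for which $f_1 = \frac{c+3}{4}$ and $f_2 = f_3 = \frac{c-1}{4}$, where $c$ denotes the constant $\phi$-sectional curvature.

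First I would form the difference $f_1 - f_3 = \frac{c+3}{4} - \frac{c-1}{4} = 1$. Because this value equals $1$ independently of $c$, the inequality $f_1 \neq f_3$ can never fail on a Sasakian-space-form. Hence any invariant submanifold $M$ of a Sasakian-space-form meets the hypotheses of Corollary 3.3, and the equivalence between being totally geodesic and being semiparallel is inherited immediately.

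Should a self-contained argument be preferred instead, I would replay the proof of Theorem 3.2 with the Riemann curvature tensor in place of the concircular tensor. Starting from $\overline{R}(X,Y)\cdot h = 0$ and expanding it through (\ref{eqn2.23}), I would set $X = U = \xi$ and apply (\ref{eqn2.31}) to eliminate the terms containing $h(\cdot,\xi)$, reducing the identity to $h(Z, R(\xi,Y)\xi) = 0$. Substituting $R(\xi,Y)\xi = (f_1-f_3)[\eta(Y)\xi - Y]$ from (\ref{eqn2.33}) and using (\ref{eqn2.31}) once more leaves $(f_1-f_3)h(Z,Y) = 0$; since $f_1 - f_3 = 1$, this forces $h = 0$. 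The converse direction is trivial, a totally geodesic submanifold having $h = 0$ and therefore $\overline{R}\cdot h = 0$ automatically.

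I do not anticipate any genuine obstacle. The single point requiring care is the nondegeneracy condition $f_1 \neq f_3$, and the one-line computation $f_1 - f_3 = 1$ settles it once and for all; the entire content of the corollary is that the genericity assumption needed in the generalized setting becomes vacuous in the Sasakian case.
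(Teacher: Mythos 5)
Your proposal is correct and matches the paper's intent: the corollary is stated there without proof as an immediate specialization of the generalized result, and your computation $f_1-f_3=\tfrac{c+3}{4}-\tfrac{c-1}{4}=1\neq 0$ is precisely the verification needed to invoke Corollary~3.3. Your alternative self-contained argument (setting $X=U=\xi$ in $\overline{R}(X,Y)\cdot h=0$ and using $h(\cdot,\xi)=0$ together with $R(\xi,Y)\xi=(f_1-f_3)[\eta(Y)\xi-Y]$) is also sound and mirrors the proof of Theorem~3.2 with $\overline{R}$ in place of $\overline{C}$.
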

\begin{theorem}
Let $M$ be an invariant submanifold of a generalized Sasakian-space-form
$\overline{M}^{2n+1}(f_1,f_2,f_3)$ such that $f_1 \neq f_3$ and $r\neq 2n(2n+1)(f_1-f_3)$. Then $M$ is totally geodesic
if and only if $M$ is concircularly 2-semiparallel.
\end{theorem}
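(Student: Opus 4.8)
The plan is to mirror the proof of Theorem 3.2, with the third fundamental form $\overline{\nabla}h$ playing the role that $h$ played there. The \emph{only if} direction is immediate: if $M$ is totally geodesic then $h=0$, hence $\overline{\nabla}h=0$, and every term on the right-hand side of \eqref{eqn2.29} vanishes, so $M$ is trivially concircularly $2$-semiparallel. The substance is the converse.

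First I would record the collapse identities for $\overline{\nabla}h$ on an invariant submanifold. Expanding the definition \eqref{eqn2.20} and using $h(X,\xi)=0$ from \eqref{eqn2.31} together with $\nabla_X\xi=-(f_1-f_3)\phi X$ from \eqref{eqn2.32}, a direct computation yields $(\overline{\nabla}h)(Z,U,\xi)=(\overline{\nabla}_Z h)(U,\xi)=(f_1-f_3)\,h(U,\phi Z)$ and, by symmetry of $h$, $(\overline{\nabla}h)(Z,\xi,V)=(f_1-f_3)\,h(\phi Z,V)$, while $(\overline{\nabla}h)(Z,\xi,\xi)=0$. These are the only facts I need: a $\xi$ in either of the last two slots of $\overline{\nabla}h$ brings it back to $h$, and a $\xi$ in both slots kills it.

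Next, assuming $\overline{C}(X,Y)\cdot\overline{\nabla}h=0$, I would substitute $X=U=W=\xi$ into the expansion \eqref{eqn2.29}. The first term $R^\perp(\xi,Y)(\overline{\nabla}h)(Z,\xi,\xi)$ and the second term $(\overline{\nabla}h)(C(\xi,Y)Z,\xi,\xi)$ both vanish by the last identity above, so only the third and fourth terms survive; these are $-(\overline{\nabla}h)(Z,C(\xi,Y)\xi,\xi)$ and $-(\overline{\nabla}h)(Z,\xi,C(\xi,Y)\xi)$, which by the collapse identities combine into $(f_1-f_3)\,h\!\left(\phi Z,\,C(\xi,Y)\xi\right)=0$.

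Finally I would evaluate $C(\xi,Y)\xi$ from \eqref{eqn2.26} (applied to the concircular tensor $C$ of $M$), obtaining $C(\xi,Y)\xi=\big[f_1-f_3-\tfrac{r}{2n(2n+1)}\big]\big(\eta(Y)\xi-Y\big)$; substituting this and using $h(\phi Z,\xi)=0$ reduces the condition to $(f_1-f_3)\big[f_1-f_3-\tfrac{r}{2n(2n+1)}\big]h(\phi Z,Y)=0$. Since $f_1\neq f_3$ and $r\neq 2n(2n+1)(f_1-f_3)$, both scalar factors are nonzero, so $h(\phi Z,Y)=0$ for all $Y,Z$; as $M$ is invariant, $\phi$ maps the distribution orthogonal to $\xi$ onto itself and $h(\xi,\cdot)=0$ already, hence $h=0$ and $M$ is totally geodesic. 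I expect the only real obstacle to be bookkeeping: one must choose exactly the substitution $X=U=W=\xi$ so that the normal-curvature term $R^\perp$ and the inner-argument term drop out, and verify the collapse identities slot by slot, since an ill-chosen slot would leave an $R^\perp$ contribution that does not reduce to $h$.
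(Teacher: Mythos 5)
Your proposal is correct and follows essentially the same route as the paper: the paper's proof also substitutes $\xi$ into the slots of the condition $\overline{C}(X,Y)\cdot\overline{\nabla}h=0$ (first $X=U=\xi$, computing all four terms via the same collapse identities, and then $W=\xi$), arriving at the same equation $2(f_1-f_3)\big[f_1-f_3-\tfrac{r}{2n(2n+1)}\big]h(Y,\phi Z)=0$. Performing the substitution $X=U=W=\xi$ in one step, as you do, merely kills two of the four terms immediately and shortens the bookkeeping; apart from an immaterial factor of $2$ in your combined term, the argument is the paper's.
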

\begin{proof} Let $M$ be an invariant submanifold of a generalized Sasakian-space-form
$\overline{M}^{2n+1}(f_1,f_2,f_3)$ such that $r\neq 2n(2n+1)(f_1-f_3)$.
Let $M$ be concircularly 2-semiparallel. Then $\overline{C}(X,Y)\cdot \overline{\nabla}h = 0$ and hence from (\ref{eqn2.29}) we get
\begin{align}
\label{eqn3.6}
&R^\perp(X,Y)(\overline{\nabla}h)(Z,U,W) - (\overline{\nabla}h)\left(C(X,Y)Z,U,W\right)\\
\nonumber&-(\overline{\nabla}h)\left(Z,C(X,Y)U,W\right)-(\overline{\nabla}h)\left(Z,U,C(X,Y)W\right) = 0.
\end{align}
Putting $X = U = \xi$ in (\ref{eqn3.6}), we obtain
\begin{align}
\label{eqn3.7}
&R^\perp(\xi,Y)(\overline{\nabla}h)(Z,\xi,W) - (\overline{\nabla}h)\left(C(\xi,Y)Z,\xi,W\right)\\
\nonumber&-(\overline{\nabla}h)\left(Z,C(\xi,Y)\xi,W\right)-(\overline{\nabla}h)\left(Z,\xi,C(\xi,Y)W\right) = 0.
\end{align}
By virtue of  (\ref{eqn2.12}), (\ref{eqn2.20}), (\ref{eqn2.26}), (\ref{eqn2.27}), (\ref{eqn2.31}) and (\ref{eqn2.32}), we get
\begin{align}
\label{eqn3.8}
(\overline{\nabla}h)(Z,\xi,W) &= (\overline{\nabla}_{Z}h)(\xi,W)\\
\nonumber&=\nabla_{Z}^\perp\left(h(\xi,W)\right)- h(\nabla_{Z}\xi,W) - h(\xi,\nabla_{Z}W)\\
\nonumber&=(f_1-f_3) h(\phi Z, W),
\end{align}
\begin{align}
\label{eqn3.9}
&(\overline{\nabla}h)(C(\xi,Y)Z,\xi,W) = (\overline{\nabla}_{C(\xi,Y)Z}h)(\xi,W)\\
\nonumber&={\nabla}^\perp_{C(\xi,Y)Z}\left(h(\xi,W)\right)-h(\nabla_{C(\xi,Y)Z}\xi,W)-h(\xi,\nabla_{C(\xi,Y)Z}W)\\
\nonumber&=(f_1-f_3)h\left({\phi}C(\xi,Y)Z,W\right)\\
\nonumber&=-(f_1-f_3)\big[f_1-f_3-\frac{r}{2n(2n+1)}\big]\eta(Z)h(\phi Y,W),
\end{align}
\begin{align}
\label{eqn3.10}
&(\overline{\nabla}h)(Z,C(\xi,Y)\xi,W) = (\overline{\nabla}_Zh\left(C(\xi,Y)\xi,W\right)\\
\nonumber&={\nabla}^{\perp}_Z\left(h(C(\xi,Y)\xi,W)\right)- h\left(\nabla_Z C(\xi,Y)\xi,W\right)- h\left(C(\xi,Y)\xi,\nabla_ZW\right)\\
\nonumber&=-\big[f_1-f_3-\frac{r}{2n(2n+1)}\big]\big[\nabla^\perp_Z h(Y,W)\\
\nonumber&+h\left(\nabla_Z{\eta(Y)\xi-Y},W\right)-h(Y,\nabla_Z W)\big],
\end{align}
\begin{align}
\label{eqn3.11}
&(\overline{\nabla}h)(Z,\xi,C(\xi,Y)W) = (\overline{\nabla}_Z h)\left(\xi,C(\xi,Y)W\right)\\
\nonumber&={\nabla}^{\perp}_Z\left(h(\xi,C(\xi,Y)W)\right)- h\left(\nabla_Z\xi,C(\xi,Y)W\right)- h\left(\xi,\nabla_ZC(\xi,Y)W\right)\\
\nonumber&=(f_1-f_3)h\left(\phi{Z},C(\xi,Y)W\right)\\
\nonumber&=-(f_1-f_3)\big[f_1-f_3-\frac{r}{2n(2n+1)}\big]\eta(W)h(\phi{Z},Y).
\end{align}
In view of (\ref{eqn3.8})-(\ref{eqn3.11}) we have from (\ref{eqn3.7}) that
\begin{align}
\label{eqn3.12}
&(f_1-f_3)R^\perp(\xi,Y)h(\phi{Z},W) + (f_1-f_3)\big[f_1-f_3\\
\nonumber&-\frac{r}{2n(2n+1)}\big]\eta(Z)h(\phi{Y},W)+\big[f_1-f_3-\frac{r}{2n(2n+1)}\big]\big[{\nabla}^{\perp}_Z h(Y,W)\\
\nonumber&+ h\left(\nabla_Z\{\eta(Y)\xi-Y\},W\right)- h(Y,\nabla_Z W)\big]\\
\nonumber&+(f_1-f_3)\big[f_1-f_3-\frac{r}{2n(2n+1)}\big]\eta(W)h(\phi{Z},Y) = 0.
\end{align}
Putting $W = \xi$ in (\ref{eqn3.12}) and using (\ref{eqn2.31}) and (\ref{eqn2.32}) we get
\begin{align*}
2(f_1-f_3)\left(f_1-f_3-\frac{r}{2n(2n+1)}\right)h(Y,\phi Z)= 0,
\end{align*}
which means that either $(1)\,f_1=f_3,$ or $(2)\,\,r=(f_1-f_3)2n(2n+1)$, or $(3)\,\,h=0$. Since neither $f_1=f_3$ nor $r=(f_1-f_3)2n(2n+1)$, then we get $h=0$, that is, $M$ is totally geodesic submanifold of $\overline M$.The converse part of the theorem is obvious and hence the proof is complete.
\end{proof}
\begin{corollary}
An invariant submanifold of a Sasakian-space-form is totally geodesic if and only if
its second fundamental form is concircularly 2-semiparallel with $r \neq 2n(2n+1)$.
\end{corollary}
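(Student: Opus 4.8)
The plan is to mirror the proof of Theorem 3.2, promoting it by one derivative order. The reverse implication is immediate: if $M$ is totally geodesic then $h=0$, hence $\overline{\nabla}h=0$ and every summand in (\ref{eqn2.29}) vanishes, so $\overline{C}(X,Y)\cdot\overline{\nabla}h=0$ holds trivially. The substance is the forward implication, so I would assume $M$ is concircularly 2-semiparallel, i.e.\ $\overline{C}(X,Y)\cdot\overline{\nabla}h=0$, and expand this through (\ref{eqn2.29}) to obtain (\ref{eqn3.6}).

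The decisive specialization is to set $X=U=\xi$, producing (\ref{eqn3.7}), whose four summands I would evaluate one at a time. First I would compute $(\overline{\nabla}h)(Z,\xi,W)=(\overline{\nabla}_Z h)(\xi,W)$ by expanding via (\ref{eqn2.20}); the pieces $\nabla_Z^\perp(h(\xi,W))$ and $h(\xi,\nabla_Z W)$ drop out because $h(\cdot,\xi)=0$ by (\ref{eqn2.31}), while $-h(\nabla_Z\xi,W)$ becomes $(f_1-f_3)h(\phi Z,W)$ through (\ref{eqn2.32}). The same mechanism collapses the first, second, and fourth summands of (\ref{eqn3.7}) into multiples of $h(\phi\,\cdot\,,\cdot)$, where I would also insert (\ref{eqn2.26}) and (\ref{eqn2.27}) to evaluate $C(\xi,Y)Z$ and $C(\xi,Y)W$.

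The one term demanding care is $(\overline{\nabla}h)(Z,C(\xi,Y)\xi,W)$: by (\ref{eqn2.26}) the vector $C(\xi,Y)\xi$ is proportional to $\eta(Y)\xi-Y$, which is \emph{not} a multiple of $\xi$, so its covariant derivative does not collapse and instead retains the full expression displayed in (\ref{eqn3.10}), carrying $\nabla^\perp_Z h(Y,W)$, $h(\nabla_Z\{\eta(Y)\xi-Y\},W)$ and $h(Y,\nabla_Z W)$. This is where I expect the main obstacle to lie: the bookkeeping must be handled carefully so that the scalar factor $f_1-f_3-\frac{r}{2n(2n+1)}$ is extracted correctly and the normal-bundle terms are matched against those coming from the other summands. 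Substituting (\ref{eqn3.8})--(\ref{eqn3.11}) into (\ref{eqn3.7}) then yields (\ref{eqn3.12}).

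The final maneuver is to set $W=\xi$ in (\ref{eqn3.12}). Every surviving derivative term then contains a factor of the form $h(\cdot,\xi)$ or reduces through $\nabla_Z\xi=-(f_1-f_3)\phi Z$, so the awkward block inherited from (\ref{eqn3.10}) vanishes, leaving only $2(f_1-f_3)\big(f_1-f_3-\frac{r}{2n(2n+1)}\big)h(Y,\phi Z)=0$. Because the hypotheses force $f_1\neq f_3$ and $f_1-f_3-\frac{r}{2n(2n+1)}\neq 0$, both scalar factors are nonzero, whence $h(Y,\phi Z)=0$ for all tangent $Y,Z$. Since $M$ is invariant, $\phi$ restricts to an isomorphism of the distribution orthogonal to $\xi$, and $h(Y,\xi)=0$ already by (\ref{eqn2.31}); therefore $h\equiv 0$, i.e.\ $M$ is totally geodesic. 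This completes the forward direction and hence the theorem.
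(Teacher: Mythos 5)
Your argument is correct and is essentially a restatement of the paper's proof of Theorem 3.3 (the general generalized Sasakian-space-form case), from which the paper deduces this corollary immediately by noting that a Sasakian-space-form has $f_1-f_3=\frac{c+3}{4}-\frac{c-1}{4}=1\neq 0$, so only the hypothesis $r\neq 2n(2n+1)$ remains. The computations you outline for (\ref{eqn3.8})--(\ref{eqn3.12}) and the final specialization $W=\xi$ match the paper's step for step.
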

Also, we have from Theorem 3.3 that
\begin{corollary}
Let $M$ be an invariant submanifold of a generalized Sasakian-space-form
$\overline{M}^{2n+1}(f_{1},f_{2},f_{3})$ with $f_{1}\neq f_{3}$.
Then $M$ is totally geodesic if and only if $M$ is $2$-semiparallel.
\end{corollary}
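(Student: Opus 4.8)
The plan is to mimic the proof of Theorem 3.3, but with the Riemann curvature tensor $\overline{R}$ in place of the concircular curvature tensor $\overline{C}$. Assume $M$ is $2$-semiparallel, so that $\overline{R}(X,Y)\cdot\overline{\nabla}h=0$, and expand this relation by (\ref{eqn2.24}). Setting $X=U=\xi$ reduces it to
\begin{align*}
&R^\perp(\xi,Y)(\overline{\nabla}h)(Z,\xi,W)-(\overline{\nabla}h)(R(\xi,Y)Z,\xi,W)\\
&\quad-(\overline{\nabla}h)(Z,R(\xi,Y)\xi,W)-(\overline{\nabla}h)(Z,\xi,R(\xi,Y)W)=0 .
\end{align*}

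I would then evaluate the four summands using the submanifold analogues of (\ref{eqn3.8})--(\ref{eqn3.11}). The ingredients needed are $(\overline{\nabla}h)(Z,\xi,W)=(f_1-f_3)h(\phi Z,W)$, which follows from (\ref{eqn2.31}) and (\ref{eqn2.32}) exactly as in (\ref{eqn3.8}), and $R(\xi,Y)\xi=(f_1-f_3)[\eta(Y)\xi-Y]$, which is (\ref{eqn2.33}) with $X=\xi$ (more generally $R(\xi,Y)Z=(f_1-f_3)[g(Y,Z)\xi-\eta(Z)Y]$, obtained from (\ref{eqn2.30}) and $h(X,\xi)=0$). The single structural difference from Theorem 3.3 is that each of these relations now carries the factor $(f_1-f_3)$ instead of the concircular factor $\big[f_1-f_3-\frac{r}{2n(2n+1)}\big]$; this is precisely why the hypothesis $r\neq 2n(2n+1)(f_1-f_3)$ drops out of the statement.

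Substituting these and then putting $W=\xi$, the first two summands vanish (each retains a factor $h(\,\cdot\,,\xi)=0$), while the remaining two, built from $R(\xi,Y)\xi$ and $\nabla_Z\xi=-(f_1-f_3)\phi Z$, each collapse to $-(f_1-f_3)^2 h(Y,\phi Z)$. Using the symmetry of $h$, the relation reduces to
\begin{align*}
2(f_1-f_3)^2\,h(Y,\phi Z)=0 .
\end{align*}
Since $f_1\neq f_3$, this forces $h(Y,\phi Z)=0$ for all $Y,Z$; replacing $Z$ by $\phi Z$ and using $\phi^2=-I+\eta\otimes\xi$ together with $h(Y,\xi)=0$ gives $h(Y,Z)=0$, so $M$ is totally geodesic. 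The converse is immediate, as $h=0$ trivially yields $\overline{R}(X,Y)\cdot\overline{\nabla}h=0$.

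The only genuine labour is the term-by-term bookkeeping of the four summands; the point to watch is the coefficient and sign count in $(\overline{\nabla}h)(Z,R(\xi,Y)\xi,W)$ and $(\overline{\nabla}h)(Z,\xi,R(\xi,Y)W)$, which after $W=\xi$ both reduce to the same expression $-(f_1-f_3)^2 h(Y,\phi Z)$ and thereby produce the factor $2$. I do not expect any obstacle beyond this computation: conceptually, replacing $\overline{C}$ by $\overline{R}$ merely removes the scalar-curvature dependence and so weakens the hypotheses to $f_1\neq f_3$ alone.
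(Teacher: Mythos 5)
Your proof is correct and takes essentially the same route as the paper: the paper states this corollary as following ``from Theorem 3.3,'' meaning that the computation (3.8)--(3.12) is to be repeated with $\overline{C}$ replaced by $\overline{R}$, which is precisely the substitution you carry out. Your term-by-term evaluation, ending in $2(f_1-f_3)^2h(Y,\phi Z)=0$, reproduces the paper's argument with the concircular factor $\bigl[f_1-f_3-\tfrac{r}{2n(2n+1)}\bigr]$ replaced by $(f_1-f_3)$, which is exactly why the scalar-curvature hypothesis disappears.
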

\begin{corollary}
An invariant submanifold of a Sasakian-space-form is totally geodesic if
and only if its second fundamental form is $2$-semiparallel.
\end{corollary}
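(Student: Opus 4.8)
The plan is to obtain this statement as the Sasakian specialization of Corollary 3.6, so that almost no fresh computation is needed. A Sasakian-space-form is exactly a generalized Sasakian-space-form $\overline{M}^{2n+1}(f_1,f_2,f_3)$ whose structure functions are the constants $f_1=\frac{c+3}{4}$ and $f_2=f_3=\frac{c-1}{4}$, as seen by comparing (\ref{eqn1.2}) with (\ref{eqn1.3}). Hence the very first thing I would record is the identity $f_1-f_3=\frac{c+3}{4}-\frac{c-1}{4}=1$, which in particular shows that $f_1\neq f_3$ holds automatically on every Sasakian-space-form.

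With $f_1\neq f_3$ guaranteed, the hypothesis of Corollary 3.6 is met unconditionally, and the equivalence ``totally geodesic $\Longleftrightarrow$ $2$-semiparallel'' carries over verbatim. This also explains why no auxiliary scalar-curvature assumption is attached here, in contrast to the concircular Corollary 3.5: for concircular $2$-semiparallelism the controlling coefficient is $f_1-f_3-\frac{r}{2n(2n+1)}$ and must be kept nonzero, whereas for ordinary $2$-semiparallelism the controlling coefficient is simply $f_1-f_3$, which the value $1$ already makes nonzero.

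If a self-contained derivation is preferred, I would replay the proof of Theorem 3.3 with the concircular tensor $\overline{C}$ replaced by the Riemann tensor $\overline{R}$ throughout. Starting from $\overline{R}(X,Y)\cdot\overline{\nabla}h=0$ and setting $X=U=\xi$, I would use (\ref{eqn2.11}), (\ref{eqn2.12}), (\ref{eqn2.31}) and (\ref{eqn2.32}) in place of (\ref{eqn2.26}) and (\ref{eqn2.27}) to evaluate the four terms exactly as in (\ref{eqn3.8})--(\ref{eqn3.11}); putting $W=\xi$ at the end collapses the expression to a relation of the form $2(f_1-f_3)^2\,h(Y,\phi Z)=0$, whence $h=0$ because $f_1-f_3=1$. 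The converse is immediate, since $h=0$ makes every covariant derivative of $h$, and therefore the $2$-semiparallel condition, vanish trivially. I anticipate no genuine obstacle; the only delicate point in the self-contained route is the bookkeeping of the four expansion terms and their coefficients, which is precisely why invoking Corollary 3.6 is the cleaner path.
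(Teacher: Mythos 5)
Your proposal is correct and follows essentially the same route as the paper, which obtains this corollary by specializing Corollary 3.6 (itself derived from Theorem 3.3 by replacing the concircular tensor with the Riemann curvature tensor) to the Sasakian case, where $f_1-f_3=\frac{c+3}{4}-\frac{c-1}{4}=1\neq 0$ holds automatically. Your observation that the controlling coefficient collapses to $2(f_1-f_3)^2\,h(Y,\phi Z)=0$, removing the need for any scalar-curvature restriction, matches the paper's reasoning exactly.
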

\section{Invariant submanifolds with semi-symmetric metric connection}
\indent Let $M$ be an invariant submanifold of a generalized Sasakian-space-form $\overline{M}^{2n+1}(f_{1},f_{2},f_{3})$
with respect to the Levi-Civita connection $\overline{\nabla}$ and
semi-symmetric metric connection $\widetilde{\overline{\nabla}}$.
Let $\nabla$ be the induced connection on $M$ from the connection $\overline{\nabla}$
and $\widetilde{\nabla}$ be the induced connection
on $M$ from the connection $\widetilde{\overline{\nabla}}$.\\
\indent Let $h$ and $\widetilde{h}$ be the second fundamental form with
respect to the Levi-Civita connection and semi-symmetric metric connection,
respectively. Then we have
\begin{align}
\label{eqn4.1}
  \widetilde{\overline{\nabla}}_{X}Y=\widetilde{\nabla}_{X}Y+\widetilde{h}(X,Y).
\end{align}
By virtue of (\ref{eqn2.16}), we get from (\ref{eqn2.41}) and (\ref{eqn4.1}) that
\begin{align}
\label{eqn4.2}\widetilde{\nabla}_{X}Y+\widetilde{h}(X,Y) &= \overline{\nabla}_{X}Y+\eta(Y)X-g(X,Y)\xi \\
\nonumber  &=\nabla_{X}Y+h(X,Y)+\eta(Y)X-g(X,Y)\xi.
\end{align}
Since $M$ is invariant, then by equating the tangential and normal components of (\ref{eqn4.2}), we obtain
\begin{align}
\label{eqn4.3}
\widetilde{\nabla}_{X}Y=\nabla_{X}Y+\eta(Y)X-g(X,Y)\xi
\end{align}
and
\begin{align}
\label{eqn4.4}
\widetilde{h}(X,Y)=h(X,Y).
\end{align}

This leads to the following:
\begin{theorem}
Let $M$ be an invariant submanifold of a generalized Sasakian-space-form $\overline{M}^{2n+1}(f_{1},f_{2},f_{3})$ with the Levi-Civita connection
$\overline{\nabla}$ and semi-symmetric connection $\widetilde{\overline{\nabla}}$ and $\nabla$ be the induced connection on $M$
from the connection $\overline{\nabla}$ and $\widetilde{\nabla}$ be the induced connection on $M$ from the connection
$\widetilde{\overline{\nabla}}$. If $h$ and $\widetilde{h}$ are the second fundamental forms with the Levi-Civita connection
and semi-symmetric metric connection, respectively, then
\begin{enumerate}
\item[(i)] $M$ admits semi-symmetric metric connection.
\item[(ii)] The second fundamental forms with respect to $\nabla$ and $\widetilde{\nabla}$ are equal.
\end{enumerate}
\end{theorem}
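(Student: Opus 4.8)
The plan is to read off both conclusions directly from the decomposition already recorded in \eqref{eqn4.3} and \eqref{eqn4.4}, which were obtained by splitting the Gauss-type identity \eqref{eqn4.2} into its tangential and normal parts on the invariant submanifold $M$. Conclusion (ii) is then immediate, since \eqref{eqn4.4} asserts $\widetilde{h}(X,Y)=h(X,Y)$ for all $X,Y\in\Gamma(TM)$; the two second fundamental forms coincide with nothing further to prove.

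For conclusion (i), the key observation is that the induced connection relation \eqref{eqn4.3}, namely $\widetilde{\nabla}_{X}Y=\nabla_{X}Y+\eta(Y)X-g(X,Y)\xi$, has exactly the same form as the defining formula \eqref{eqn2.41} for a semi-symmetric metric connection, but now with the induced Levi-Civita connection $\nabla$ on $M$ in place of $\overline{\nabla}$. Because $M$ is invariant, $\xi$ is tangent to $M$ and the almost contact metric structure $(\phi,\xi,\eta,g)$ restricts to $M$, so \eqref{eqn4.3} is literally the statement characterizing $\widetilde{\nabla}$ as a semi-symmetric metric connection on $M$.

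To make this explicit I would verify the two defining conditions. First, I would compute the torsion $\widetilde{T}(X,Y)=\widetilde{\nabla}_{X}Y-\widetilde{\nabla}_{Y}X-[X,Y]$; substituting \eqref{eqn4.3}, the terms $g(X,Y)\xi$ and $g(Y,X)\xi$ cancel by symmetry of $g$, and torsion-freeness of the induced Levi-Civita connection $\nabla$ leaves precisely $\widetilde{T}(X,Y)=\eta(Y)X-\eta(X)Y$, the required semi-symmetry. Second, I would compute $(\widetilde{\nabla}_{X}g)(Y,Z)=X\big(g(Y,Z)\big)-g(\widetilde{\nabla}_{X}Y,Z)-g(Y,\widetilde{\nabla}_{X}Z)$ using \eqref{eqn4.3}; the extra contributions $\eta(Y)g(X,Z)-g(X,Y)\eta(Z)$ and $\eta(Z)g(X,Y)-g(X,Z)\eta(Y)$ pair off and cancel, so that metric-compatibility of $\nabla$ yields $\widetilde{\nabla}g=0$.

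Both verifications rely only on the symmetry of $g$ and on the fact that $\nabla$ is the induced Levi-Civita connection, so no genuine obstacle arises. The entire substance of the theorem is the structural recognition, via \eqref{eqn4.3} and \eqref{eqn4.4}, that the invariance of $M$ forces the induced connection $\widetilde{\nabla}$ to inherit the semi-symmetric metric property of $\widetilde{\overline{\nabla}}$ while leaving the second fundamental form unchanged.
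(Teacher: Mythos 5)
Your proposal is correct and follows essentially the same route as the paper, which likewise obtains the theorem by equating the tangential and normal parts of \eqref{eqn4.2} to get \eqref{eqn4.3} and \eqref{eqn4.4} and then reading off both conclusions. Your explicit verification of the torsion identity and of $\widetilde{\nabla}g=0$ from \eqref{eqn4.3} is a welcome addition of detail that the paper leaves implicit.
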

From (\ref{eqn2.29a}) and (\ref{eqn4.4}), we get
\begin{align*}
H=\widetilde{H},
\end{align*}
where $H$ and $\widetilde{H}$ are respectively the mean curvature vectors of $M$
with respect to Levi-Civita connection and semi-symmetric metric connection.

This leads to the following:
\begin{theorem}
The mean curvature vectors say $H$ and $\widetilde{H}$ of an invariant submanifold $M$ with respect to the Levi-Civita connection
and semi-symmetric metric connection are same, i.e., $H=\widetilde{H}$.
\end{theorem}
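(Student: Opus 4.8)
The plan is to obtain $H=\widetilde{H}$ as nothing more than the trace of the normal-component identity already secured in the proof of Theorem 4.1. First I would recall the definition of the mean curvature vector: choosing a local $g$-orthonormal tangent frame $\{e_1,\dots,e_m\}$ on $M$ (where $m=\dim M$), the mean curvature vectors with respect to the two connections are
\begin{align*}
H=\frac{1}{m}\sum_{i=1}^{m}h(e_i,e_i),\qquad \widetilde{H}=\frac{1}{m}\sum_{i=1}^{m}\widetilde{h}(e_i,e_i).
\end{align*}

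Next I would invoke the key relation \eqref{eqn4.4}, namely $\widetilde{h}(X,Y)=h(X,Y)$ for all $X,Y\in\Gamma(TM)$, which was derived by equating the normal components in \eqref{eqn4.2}. Evaluating this identity on each pair $(e_i,e_i)$ shows that the summands in the two traces coincide termwise, whence $\sum_i \widetilde{h}(e_i,e_i)=\sum_i h(e_i,e_i)$. Dividing by $m$ yields $\widetilde{H}=H$ immediately.

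There is essentially no obstacle here, since the entire content is carried by \eqref{eqn4.4}; the present statement is just its trace. The only point worth noting is that a single frame can be used for both computations: the induced connections $\nabla$ and $\widetilde{\nabla}$ act on the same tangent bundle $TM$ equipped with the same metric $g$ (the semi-symmetric metric connection satisfies $\widetilde{\overline{\nabla}}g=0$, and the relation \eqref{eqn2.41} leaves the metric unchanged), so a $g$-orthonormal frame is legitimate for evaluating both $H$ and $\widetilde{H}$. Hence the result follows, completing the proof.
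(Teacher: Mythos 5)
Your proposal is correct and follows essentially the same route as the paper: both deduce $H=\widetilde{H}$ directly from the equality $\widetilde{h}=h$ established in \eqref{eqn4.4}, the only difference being that you spell out the trace definition of the mean curvature vector (and the legitimacy of using one $g$-orthonormal frame) where the paper simply cites \eqref{eqn2.29a} and \eqref{eqn4.4}. No gap; your version is just slightly more explicit.
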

\begin{corollary}
An invariant submanifold $M$ of a generalized Sasakian-space-form $\overline{M}$ endowed with
a semi-symmetric metric connection is minimal with respect to semi-symmetric metric connection
if and only if it is minimal with respect to Levi-Civita connection.
\end{corollary}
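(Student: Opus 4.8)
The plan is to deduce the corollary directly from the equality of mean curvature vectors established in Theorem 4.2. Recall from the discussion around \eqref{eqn2.29a} that a submanifold is called \emph{minimal} with respect to a given connection precisely when the associated mean curvature vector vanishes. Thus minimality with respect to the Levi-Civita connection means $H=0$, while minimality with respect to the semi-symmetric metric connection means $\widetilde{H}=0$.

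First I would recall the chain of identities leading to Theorem 4.2. Since $M$ is invariant, equating the normal components in \eqref{eqn4.2} gives $\widetilde{h}=h$, that is, the two second fundamental forms coincide (this is \eqref{eqn4.4}, part (ii) of Theorem 4.1). Taking the trace of both sides over an orthonormal frame tangent to $M$ and normalizing by the dimension yields $\widetilde{H}=H$, which is precisely the content of Theorem 4.2.

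With $H=\widetilde{H}$ in hand, the equivalence is immediate: $H=0$ holds if and only if $\widetilde{H}=0$. Translating back through the definition of minimality, $M$ is minimal with respect to the semi-symmetric metric connection if and only if it is minimal with respect to the Levi-Civita connection, which is exactly the assertion of the corollary.

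I do not anticipate any genuine obstacle here, since the result is a direct consequence of Theorem 4.2; the only point requiring care is to make explicit that \emph{minimal} is defined as the vanishing of the mean curvature vector, so that the equality $H=\widetilde{H}$ transfers the condition verbatim between the two connections.
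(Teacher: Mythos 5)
Your argument is correct and is exactly the paper's route: the identity $\widetilde{h}=h$ from \eqref{eqn4.4} gives $\widetilde{H}=H$ (Theorem 4.2), and minimality, being the vanishing of the mean curvature vector, then transfers verbatim between the two connections.
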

\begin{corollary}
$M$ is totally umbilical with respect to semi-symmetric metric connection if and only if it is
totally umbilical with respect to Levi-Civita connection.
\end{corollary}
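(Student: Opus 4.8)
The plan is to exploit the two identities already established for invariant submanifolds, namely the equality of second fundamental forms $\widetilde{h}(X,Y) = h(X,Y)$ recorded in \eqref{eqn4.4}, and the equality of mean curvature vectors $H = \widetilde{H}$ furnished by Theorem 4.2. Once these are in hand, the corollary reduces to a direct substitution in the defining relation \eqref{eqn2.29a} for totally umbilical submanifolds, with no further geometric input required.

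First I would recall the defining conditions explicitly. The submanifold $M$ is totally umbilical with respect to the Levi-Civita connection precisely when $h(X,Y) = g(X,Y)H$ holds for all $X,Y \in \Gamma(TM)$, and totally umbilical with respect to the semi-symmetric metric connection precisely when $\widetilde{h}(X,Y) = g(X,Y)\widetilde{H}$ holds for all such $X,Y$. A point worth flagging is that the metric $g$ appearing in both conditions is the same, since $\widetilde{\overline{\nabla}}$ is metric by hypothesis and hence induces the same $g$ on $M$; this is what makes the two umbilicity conditions directly comparable.

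For the forward implication I would assume $M$ is totally umbilical with respect to $\widetilde{\overline{\nabla}}$, so that $\widetilde{h}(X,Y) = g(X,Y)\widetilde{H}$. Substituting $\widetilde{h} = h$ from \eqref{eqn4.4} on the left-hand side and $\widetilde{H} = H$ on the right-hand side yields $h(X,Y) = g(X,Y)H$, which is exactly the totally umbilical condition for the Levi-Civita connection. The converse is obtained by running the same two substitutions in reverse. Because the argument is simply a symmetric pair of substitutions, I do not anticipate any genuine obstacle: the entire content of the corollary has effectively been front-loaded into \eqref{eqn4.4} and Theorem 4.2, and the only thing left to verify is that a single metric $g$ governs both umbilicity conditions, which holds precisely because $\widetilde{\overline{\nabla}}$ is a metric connection.
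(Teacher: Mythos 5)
Your proposal is correct and follows essentially the same route as the paper: the corollary is an immediate consequence of $\widetilde{h}=h$ from \eqref{eqn4.4} together with $H=\widetilde{H}$ (Theorem 4.2), substituted into the umbilicity condition \eqref{eqn2.29a}. Your additional remark that the same metric $g$ governs both conditions is a sensible point to make explicit, but it does not change the argument.
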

We can write the equations (\ref{eqn2.20}) and (\ref{eqn2.20a}) with respect to semi-symmetric metric connection as
\begin{align}
\label{eqn4.5}
(\widetilde{\overline{\nabla}}_X h)(Y,Z)=\widetilde{\nabla}_X^{\bot}(h(Y,Z))-h(\widetilde{\nabla}_X Y,Z)-h(Y,\widetilde{\nabla}_X Z),
\end{align}
\begin{align}
\label{eqn4.6}
(\widetilde{\overline{\nabla}}^{2}h)(Z,W,X,Y) &= (\widetilde{\overline{\nabla}}_X {\widetilde{\overline{\nabla}}}_Y h)(Z,W) \\
\nonumber  &= \widetilde{\nabla}_X^{\bot}((\widetilde{\overline{\nabla}}_Y h)(Z,W))-(\widetilde{\overline{\nabla}}_Y h)(\widetilde{\nabla}_X Z,W)\\
\nonumber  &-(\widetilde{\overline{\nabla}}_X h)(Z,\widetilde{\nabla}_Y W)-(\widetilde{\overline{\nabla}}_{{\widetilde{\nabla}}_X Y}h)(Z,W).
\end{align}
Let $M$ be an invariant submanifold of generalized Sasakian-space-form $\overline{M}^{2n+1}(f_1,f_2,f_3)$ with respect to
 semi-symmetric metric connection. Then the second fundamental form $h$ is:
 \begin{enumerate}
\item[(i)] recurrent with respect to semi-symmetric metric connection if
 \begin{align}\label{eqn4.7}
 (\widetilde{\overline{\nabla}}_X h)(Y,Z)=D(X)h(Y,Z),
 \end{align}
 where D is an $1$-form on $M$.
\item[(ii)] $2$-recurrent with respect to semi-symmetric metric connection if
 \begin{align}\label{4.7a}
   (\widetilde{\overline{\nabla}}^{2}h)(Z,W,X,Y)=\psi(X,Y)h(Z,W),
 \end{align}
 where $\psi$ is $2$-form on $M$.
\item[(iii)] generalized $2$-recurrent with respect to semi-symmetric metric connection if
\begin{align}
\label{eqn4.11}
(\widetilde{\overline{\nabla}}^{2}h)(Z,W,X,Y) &= (\widetilde{\overline{\nabla}}_X {\widetilde{\overline{\nabla}}}_Y h)(Z,W) \\
\nonumber  &=\psi(X,Y)h(Z,W)+\rho(X)(\widetilde{\overline{\nabla}}_Y h)(Z,W),
\end{align}
where $\rho(X)$ is an $1$-form on $M$.
\end{enumerate}
We now prove the following:
\begin{theorem}
Let $M$ be an invariant submanifold of a generalized Sasakian-space-form $\overline{M}^{2n+1}(f_1,f_2,f_3)$
with respect to semi-symmetric metric connection such that $(f_1-f_3)^2+1 \neq 0$. Then $h$ is recurrent with respect to semi-symmetric
metric connection if and only if $M$ is totally geodesic.
\end{theorem}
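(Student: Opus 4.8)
The plan is to establish the nontrivial forward implication; the converse is immediate, since if $M$ is totally geodesic then $h=0$, and the recurrence relation (\ref{eqn4.7}) holds trivially (take $D=0$).

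For the forward direction, I would start from the defining relation (\ref{eqn4.7}) of recurrence and expand its left-hand side with (\ref{eqn4.5}). The decisive substitution is $Z=\xi$: by (\ref{eqn2.31}) we have $h(Y,\xi)=0$ on an invariant submanifold, so both $\widetilde{\nabla}^{\perp}_X(h(Y,\xi))$ and $h(\widetilde{\nabla}_X Y,\xi)$ vanish, and the right-hand side $D(X)h(Y,\xi)$ vanishes as well. This collapses the identity to
\begin{align*}
h(Y,\widetilde{\nabla}_X\xi)=0.
\end{align*}

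Next I would compute $\widetilde{\nabla}_X\xi$ explicitly. Putting $Y=\xi$ in (\ref{eqn4.3}) and invoking (\ref{eqn2.32}) gives $\widetilde{\nabla}_X\xi=-(f_1-f_3)\phi X+X-\eta(X)\xi$. Substituting this into the previous display, using $h(Y,\xi)=0$ once more, and then applying (\ref{eqn2.36}) together with the symmetry of $h$ to rewrite $h(Y,\phi X)=\phi h(X,Y)$, I expect to reach the key relation
\begin{align*}
h(X,Y)=(f_1-f_3)\,\phi h(X,Y).
\end{align*}

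The crux — and the point where a hasty argument would stall, since $\phi h(X,Y)$ is not visibly proportional to $h(X,Y)$ — is to apply $\phi$ once more. Because $h(X,Y)$ is normal while $\xi$ is tangent, $\eta(h(X,Y))=0$, so (\ref{eqn2.1}) yields $\phi^2 h(X,Y)=-h(X,Y)$; hence applying $\phi$ to the displayed relation gives $\phi h(X,Y)=-(f_1-f_3)h(X,Y)$. Feeding this back produces $\big[1+(f_1-f_3)^2\big]h(X,Y)=0$, and the hypothesis $(f_1-f_3)^2+1\neq 0$ forces $h=0$, i.e.\ $M$ is totally geodesic, completing the proof.
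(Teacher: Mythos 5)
Your proof is correct and follows essentially the same route as the paper: putting $Z=\xi$ in the recurrence relation to obtain $h(Y,\widetilde{\nabla}_X\xi)=0$, computing $\widetilde{\nabla}_X\xi=-(f_1-f_3)\phi X+X-\eta(X)\xi$, and arriving at $\big[1+(f_1-f_3)^2\big]h=0$. The only cosmetic difference is that the paper replaces $X$ by $\phi X$ in the scalar identity $h(Y,X)-(f_1-f_3)h(Y,\phi X)=0$, whereas you apply $\phi$ to the normal-bundle-valued relation via \eqref{eqn2.36} and $\phi^2h(X,Y)=-h(X,Y)$; both steps are valid and yield the same conclusion.
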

\begin{proof} Let us take the second fundamental form $h$ be recurrent of an invariant submanifold $M$ of a generalized Sasakian-space-form $\overline{M}^{2n+1}(f_1,f_2,f_3)$ with respect to semi-symmetric metric connection. Then we have the relation ({\ref{eqn4.7}}).\\
Putting $Z=\xi$ in (\ref{eqn4.7}) and using (\ref{eqn2.31}) and (\ref{eqn4.5}), we derive
\begin{align}
\label{eqn4.8}
h(Y,\widetilde{\overline{\nabla}}_X\xi)=0.
\end{align}
By virtue of (\ref{eqn2.2}), (\ref{eqn2.7}) and (\ref{eqn2.43}) it follows from (\ref{eqn4.8}) that
\begin{align}
\label{eqn4.9}
   h(Y,X)-(f_1-f_3)h(Y,\phi X)=0.
\end{align}
Interchanging $X$ by $\phi X$ in (\ref{eqn4.9}) and using (\ref{eqn2.1}) and (\ref{eqn2.31}), we find
\begin{align}
\label{eqn4.10}
h(Y,\phi X)+(f_1-f_3)h(Y,X)=0.
\end{align}
From (\ref{eqn4.9}) and (\ref{eqn4.10}) we get $[(f_1-f_3)^2+1]h(X,Y)=0,$ which implies that
either $h(X,Y)=0$ for all $X,Y$ on $M$ or $(f_1-f_3)^2+1=0$, but from the hypothesis of the
theorem $(f_1-f_3)^2+1\neq0$, then from the above relation we get $h=0$ and hence $M$ is totally geodesic.
The converse statement is trivial. This proves the theorem completely.
\end{proof}
\begin{corollary}
The second fundamental form of an invariant submanifold $M$ of a Sasakian-space-form with respect to semi-symmetric
metric connection is recurrent if and only if $M$ is totally geodesic.
\end{corollary}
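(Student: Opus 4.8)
The plan is to obtain this corollary as an immediate specialization of the preceding theorem, so the first thing I would do is recall the precise relationship between the two notions. Comparing \eqref{eqn1.2} with \eqref{eqn1.3}, a Sasakian-space-form is exactly a generalized Sasakian-space-form whose three functions are the constants $f_1 = \frac{c+3}{4}$ and $f_2 = f_3 = \frac{c-1}{4}$, where $c$ is the constant $\phi$-sectional curvature. With these values in hand I would compute the single quantity on which the hypothesis of the previous theorem depends, namely
\[
f_1 - f_3 = \frac{c+3}{4} - \frac{c-1}{4} = 1.
\]

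Next I would verify that the standing nondegeneracy assumption of the previous theorem holds automatically. Since $f_1 - f_3 = 1$ on any Sasakian-space-form, we get
\[
(f_1 - f_3)^2 + 1 = 1 + 1 = 2 \neq 0,
\]
independently of the value of $c$. Thus the condition $(f_1-f_3)^2 + 1 \neq 0$ can never fail in this setting, and the hypotheses of the previous theorem are met in full.

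Finally I would simply invoke that theorem: regarding the Sasakian-space-form as a generalized Sasakian-space-form with $(f_1-f_3)^2+1 = 2 \neq 0$, the theorem yields that the second fundamental form $h$ of the invariant submanifold $M$ is recurrent with respect to the semi-symmetric metric connection if and only if $M$ is totally geodesic, which is precisely the statement of the corollary.

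I expect essentially no obstacle here, as the entire mathematical content resides in the preceding theorem and this corollary is a one-line specialization. The only point demanding any attention is the elementary arithmetic check that $f_1 - f_3 = 1$, and hence $(f_1-f_3)^2 + 1 = 2$, for a Sasakian-space-form; once this is noted the hypothesis of the theorem is seen to be vacuously satisfied, leaving nothing further to prove.
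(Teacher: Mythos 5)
Your proposal is correct and matches the paper's (implicit) derivation exactly: the corollary is obtained by specializing Theorem 4.3 to $f_1=\frac{c+3}{4}$, $f_3=\frac{c-1}{4}$, so that $f_1-f_3=1$ and $(f_1-f_3)^2+1=2\neq 0$. Nothing further is needed.
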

\begin{theorem}
Let $M$ be an invariant submanifold of a generalized Sasakian-space-form
$\overline{M}^{2n+1}(f_1,f_2,f_3)$ with respect to semi-symmetric metric connection
such that $(f_1-f_3)^2+1\neq0.$ Then $h$ is generalized $2$-recurrent with
respect to semi-symmetric metric connection if and only if $M$ is totally geodesic.
\end{theorem}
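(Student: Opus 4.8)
The plan is to mimic the structure of the proof of Theorem 4.2, reducing the generalized $2$-recurrent condition to a relation purely in terms of $h$ evaluated with a $\phi$-twist, so that the algebraic factor $(f_1-f_3)^2+1$ emerges and forces $h=0$. First I would assume $h$ is generalized $2$-recurrent with respect to $\widetilde{\overline{\nabla}}$, so that (\ref{eqn4.11}) holds. The natural substitution is $Z=\xi$, since on an invariant submanifold $h(X,\xi)=0$ by (\ref{eqn2.31}), and all the $\xi$-derivative formulas (\ref{eqn2.7}), (\ref{eqn2.32}), together with the semi-symmetric correction $\widetilde{\overline{\nabla}}_X\xi$ computed via (\ref{eqn2.41}), are available. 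Putting $Z=\xi$ (and keeping $W$ general) should collapse the left-hand side $(\widetilde{\overline{\nabla}}_X\widetilde{\overline{\nabla}}_Y h)(\xi,W)$ into an expression involving $h(\phi\,(\cdot),W)$ and $h(\cdot,W)$, exactly as the single-derivative computation in (\ref{eqn4.8})--(\ref{eqn4.9}) produced $h(Y,X)-(f_1-f_3)h(Y,\phi X)$.

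The key intermediate step is to compute $(\widetilde{\overline{\nabla}}_Y h)(\xi,W)$ first, using (\ref{eqn4.5}): this equals $\widetilde{\nabla}^{\perp}_Y(h(\xi,W))-h(\widetilde{\overline{\nabla}}_Y\xi,W)-h(\xi,\widetilde{\nabla}_Y W)$, and since $h(\xi,W)=0$ the surviving term is $-h(\widetilde{\overline{\nabla}}_Y\xi,W)$. From (\ref{eqn2.7}) and (\ref{eqn2.41}) one has $\widetilde{\overline{\nabla}}_Y\xi=-(f_1-f_3)\phi Y+Y-\eta(Y)\xi$, so $(\widetilde{\overline{\nabla}}_Y h)(\xi,W)=(f_1-f_3)h(\phi Y,W)-h(Y,W)$, paralleling (\ref{eqn4.9}). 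I would then apply $\widetilde{\overline{\nabla}}_X$ to this, feeding the result into the left side of (\ref{eqn4.11}) with $Z=\xi$; on the right side, the term $\psi(X,Y)h(\xi,W)$ vanishes outright by (\ref{eqn2.31}), while $\rho(X)(\widetilde{\overline{\nabla}}_Y h)(\xi,W)$ becomes $\rho(X)\big[(f_1-f_3)h(\phi Y,W)-h(Y,W)\big]$. The main obstacle is bookkeeping: differentiating $(\widetilde{\overline{\nabla}}_Y h)(\xi,W)$ a second time reintroduces covariant derivatives of $\phi Y$ and of $h$ itself, so I expect many terms, and the goal is to isolate the genuinely $h$-linear piece (not involving derivatives of $h$) by a judicious further substitution.

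To kill the derivative-of-$h$ clutter I would specialize further, setting $Y=\xi$ as well (so that $\phi Y=\phi\xi=0$ and $\eta(Y)=1$) and then choosing $W=\xi$, which annihilates the nuisance terms through (\ref{eqn2.31}) and the recurrence $1$-form factors, leaving an equation of the shape $\big[(f_1-f_3)^2+1\big]\,h(X,\cdot)=0$ up to the computed $\phi$-twists, exactly as (\ref{eqn4.10}) combined with (\ref{eqn4.9}) yielded $[(f_1-f_3)^2+1]h(X,Y)=0$ in Theorem 4.2. Since the hypothesis guarantees $(f_1-f_3)^2+1\neq0$, this forces $h=0$, i.e.\ $M$ is totally geodesic. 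The converse is immediate, because if $h=0$ then every covariant derivative of $h$ vanishes and (\ref{eqn4.11}) holds trivially with arbitrary $\psi$ and $\rho$. The delicate point deserving care is verifying that the substitution $Z=Y=W=\xi$ really eliminates all terms containing $\nabla^{\perp}$ or $\nabla$ applied to $h$, rather than hiding a surviving derivative term; this is where I would slow down and check each of the four summands in (\ref{eqn4.6}) explicitly before concluding.
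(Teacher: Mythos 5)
Your setup is right, and your computation of the first derivative, $(\widetilde{\overline{\nabla}}_Y h)(\xi,W)=(f_1-f_3)h(\phi Y,W)-h(Y,W)$, agrees with what the paper obtains in (\ref{eqn4.12})--(\ref{eqn4.13}). But your plan for the decisive step fails: you propose to ``kill the derivative-of-$h$ clutter'' by specializing the \emph{differentiation direction} $Y$ to $\xi$ as well. Since $\phi\xi=0$ and, by (\ref{eqn4.3}) together with (\ref{eqn2.32}), $\widetilde{\nabla}_\xi\xi=\nabla_\xi\xi+\eta(\xi)\xi-g(\xi,\xi)\xi=-(f_1-f_3)\phi\xi=0$, every term of $(\widetilde{\overline{\nabla}}_X\widetilde{\overline{\nabla}}_\xi h)(\xi,\xi)$ vanishes identically (each summand of (\ref{eqn4.6}) carries a factor $h(\cdot,\xi)$ or $\widetilde{\nabla}_\xi\xi$), and the right-hand side of (\ref{eqn4.11}) vanishes too. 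So with $Z=Y=W=\xi$ you get $0=0$ and no constraint on $h$ whatsoever; the factor $(f_1-f_3)^2+1$ never appears. Keeping $W$ general while setting $Z=Y=\xi$ does not help either: the summand $-(\widetilde{\overline{\nabla}}_\xi h)(\widetilde{\nabla}_X\xi,W)$ still contains $\widetilde{\nabla}^{\perp}_\xi\bigl(h(\widetilde{\nabla}_X\xi,W)\bigr)$, so the derivative terms you wanted to eliminate survive.

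The correct specialization, which is what the paper does, is to put $\xi$ only into the two \emph{tensor slots}, $Z=W=\xi$, while keeping both differentiation directions $X$ and $Y$ generic. Then all terms of (\ref{eqn4.6}) with a bare $\xi$ inside $h$ die by (\ref{eqn2.31}), and what survives is $h(\widetilde{\nabla}_X\xi,\widetilde{\nabla}_Y\xi)=0$, i.e.\ (\ref{eqn4.14}). Expanding via $\widetilde{\nabla}_X\xi=X-\eta(X)\xi-(f_1-f_3)\phi X$ gives the genuinely quadratic relation (\ref{eqn4.15}) in the four quantities $h(X,Y)$, $h(\phi X,Y)$, $h(X,\phi Y)$, $h(\phi X,\phi Y)$; one then needs \emph{two} successive rounds of the substitution $X\mapsto\phi X$ (resp.\ $Y\mapsto\phi Y$) and elimination, each using $(f_1-f_3)^2+1\neq 0$, to reach $h=0$ --- it does not drop out in a single step of the shape $[(f_1-f_3)^2+1]h=0$ as you predicted. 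Your converse direction is fine.
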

\begin{proof}
Let $h$ be generalized $2$-recurrent with respect to semi-symmetric metric connection. Then we have the relation (\ref{eqn4.11}). Taking $W=\xi$ in (\ref{eqn4.11}) and using (\ref{eqn2.31}), we get
\begin{align}
\label{eqn4.12}
(\widetilde{\overline{\nabla}}_X \widetilde{\overline{\nabla}}_Y h)(Z,\xi)=\rho(X)(\widetilde{\overline{\nabla}}_Y h)(Z,\xi).
\end{align}
Again, using (\ref{eqn2.31}), (\ref{eqn4.5}) and (\ref{eqn4.6}) in (\ref{eqn4.12}), we derive
\begin{align}
\label{eqn4.13}
&-{\widetilde{\nabla}_X} ^\bot(h(Z,\widetilde{\nabla}_Y \xi))+2h(\widetilde{\nabla}_X Z,\widetilde{\nabla}_Y \xi)-\widetilde{\nabla}_X ^\bot h(Z,\widetilde{\nabla}_Y \xi)\\
 \nonumber &+ h(Z,\widetilde{\nabla}_ X \widetilde{\nabla}_Y \xi)+ h(Z,{\widetilde{\nabla}_{\widetilde{\nabla}_X Y}}\xi)+\rho(X)h(Z,\widetilde{\nabla}_Y \xi)=0.
\end{align}
Putting $Z=\xi$ in (\ref{eqn4.13}) and using (\ref{eqn2.31}), we obtain
\begin{align}
\label{eqn4.14}
h(\widetilde{\nabla}_X \xi,\widetilde{\nabla}_Y \xi)=0.
\end{align}
In view of (\ref{eqn2.31}), (\ref{eqn2.32}) and (\ref{eqn4.3}), (\ref{eqn4.14}) yields
\begin{align}
\label{eqn4.15}
{(f_1-f_3)^2} h(\phi X,\phi Y)-(f_1-f_3)[h(\phi X,Y)+h(X,\phi Y)]+h(X,Y) = 0.
\end{align}
Interchanging $X$ by $\phi X$ in (\ref{eqn4.15}) and using (\ref{eqn2.1}) and (\ref{eqn2.31}), we find
\begin{align}
\label{eqn4.16}
-{(f_1-f_3)^2} h(X,\phi Y)+(f_1-f_3)[h(X,Y)-h(\phi X,\phi Y)]+h(\phi X,Y) = 0.
\end{align}
From (\ref{eqn4.15}) and (\ref{eqn4.16}), we get
\begin{align}
\label{eqn4.17}
-(f_1-f_3)h(X,\phi Y)+h(X,Y)=0,
\end{align}
as ${(f_1-f_3)^2}+1 \neq 0.$\\
Interchanging $Y$ by $\phi Y$ in (\ref{eqn4.17}) and using (\ref{eqn2.1}) and (\ref{eqn2.31}), we obtain
\begin{align}
\label{eqn4.18}
(f_1-f_3)h(X,Y)+h(X,\phi Y)=0
\end{align}
From (\ref{eqn4.17}) and (\ref{eqn4.18}), we get $h(X,Y)=0$, as $(f_1-f_3)^2+1\neq 0$, which implies that $M$ is totally geodesic. The
converse statement is trivial. This proves the theorem.
\end{proof}

\begin{corollary}
Let $M$ be an invariant submanifold of a generalized Sasakian-space-form
$\overline{M}^{2n+1}(f_1,f_2,f_3)$ with respect to semi-symmetric metric
connection such that $(f_1-f_3)^2+1\neq0.$ Then $h$ is $2$-recurrent with
respect to semi-symmetric metric connection if and only if $M$ is totally geodesic.
\end{corollary}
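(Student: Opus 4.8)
The plan is to obtain this corollary as an immediate specialization of Theorem 4.6. The key observation is that the $2$-recurrence condition (\ref{4.7a}) is exactly the generalized $2$-recurrence condition (\ref{eqn4.11}) with the $1$-form $\rho$ taken identically zero: setting $\rho(X)=0$ in (\ref{eqn4.11}) annihilates the term $\rho(X)(\widetilde{\overline{\nabla}}_Y h)(Z,W)$ and leaves precisely $(\widetilde{\overline{\nabla}}^{2}h)(Z,W,X,Y)=\psi(X,Y)h(Z,W)$. Hence every $2$-recurrent second fundamental form is, in particular, generalized $2$-recurrent with the same $\psi$ and with $\rho\equiv0$. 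Since the hypothesis $(f_1-f_3)^2+1\neq0$ is exactly the one demanded in Theorem 4.6, that theorem applies verbatim and forces $M$ to be totally geodesic. The converse is trivial, because $h=0$ makes both sides of (\ref{4.7a}) vanish for any choice of $\psi$.

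If one prefers a self-contained argument rather than invoking the reduction, I would simply rerun the computation in the proof of Theorem 4.6 with $\rho\equiv0$. Concretely, I would put $W=\xi$ in (\ref{4.7a}) and use $h(Z,\xi)=0$ from (\ref{eqn2.31}) to discard the terms that vanish, then expand the second covariant derivative through (\ref{eqn4.5}) and (\ref{eqn4.6}). Setting $Z=\xi$ and again invoking (\ref{eqn2.31}) collapses everything to the single relation $h(\widetilde{\nabla}_X\xi,\widetilde{\nabla}_Y\xi)=0$, which is the analogue of (\ref{eqn4.14}). From there the endgame is identical: substituting (\ref{eqn2.32}) and (\ref{eqn4.3}) for $\widetilde{\nabla}_X\xi$ produces a relation of the shape (\ref{eqn4.15}), replacing $X$ by $\phi X$ via (\ref{eqn2.1}) and (\ref{eqn2.31}) gives the companion (\ref{eqn4.16}), and combining the two eliminates the mixed terms to leave $-(f_1-f_3)h(X,\phi Y)+h(X,Y)=0$; interchanging $Y$ with $\phi Y$ and adding yields $[(f_1-f_3)^2+1]h(X,Y)=0$, whence $h=0$ under the stated hypothesis.

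I do not expect any genuine obstacle here, since the entire content of the corollary is that the $\rho$-term is absent, and the algebra in $\phi$ is the same two-step interchange already executed in Theorem 4.6. The only point requiring care is bookkeeping: one must check that the terms carrying $\rho(X)$ in the generalized case really do drop out rather than reappear under differentiation, which the specialization $\rho\equiv0$ guarantees. Accordingly I would present the corollary with a one-line proof recording the inclusion of $2$-recurrent into generalized $2$-recurrent and citing Theorem 4.6.
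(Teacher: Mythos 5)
Your proposal is correct and matches the paper's intent exactly: the paper states this corollary without proof immediately after the theorem on generalized $2$-recurrence, precisely because $2$-recurrence is the special case $\rho\equiv 0$ of condition (\ref{eqn4.11}), so the theorem applies verbatim under the same hypothesis $(f_1-f_3)^2+1\neq 0$. Your optional self-contained rerun also reproduces the paper's computation (reducing to $h(\widetilde{\nabla}_X\xi,\widetilde{\nabla}_Y\xi)=0$ and the two $\phi$-interchanges), so there is nothing to add.
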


\begin{corollary}
The second fundamental form $h$ of an invariant submanifold $M$ of a
Sasakian-space-form is $2$-recurrent with respect to semi-symmetric
metric connection if and only if $M$ is totally geodesic.
\end{corollary}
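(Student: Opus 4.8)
The plan is to obtain this statement as an immediate specialization of Corollary 4.4, which already establishes that, for an invariant submanifold of \emph{any} generalized Sasakian-space-form satisfying $(f_1-f_3)^2+1\neq 0$, the second fundamental form $h$ is $2$-recurrent with respect to the semi-symmetric metric connection if and only if $M$ is totally geodesic. Thus the only point that needs to be verified is that a Sasakian-space-form meets the hypothesis of that corollary.

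First I would recall, as observed just after \eqref{eqn1.3}, that a Sasakian-space-form is precisely the generalized Sasakian-space-form whose structure functions are the constants $f_1=\frac{c+3}{4}$ and $f_2=f_3=\frac{c-1}{4}$. Substituting these values gives
\begin{align*}
f_1-f_3=\frac{c+3}{4}-\frac{c-1}{4}=1,
\end{align*}
whence $(f_1-f_3)^2+1=2\neq 0$. Hence the inequality required in Corollary 4.4 holds automatically (indeed it holds trivially for any real-valued $f_1,f_3$, since $(f_1-f_3)^2+1\geq 1>0$).

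With the hypothesis confirmed, I would simply invoke Corollary 4.4 for the invariant submanifold $M$ of the Sasakian-space-form, reading off the desired equivalence between $2$-recurrence of $h$ with respect to the semi-symmetric metric connection and total geodesy of $M$. The main obstacle here, such as it is, is purely conceptual rather than computational: recognizing the Sasakian-space-form as the constant-coefficient instance of the generalized framework and checking that the auxiliary condition $(f_1-f_3)^2+1\neq 0$ is met. No fresh manipulation of the $2$-recurrence relation \eqref{4.7a}, of the derivative formulas \eqref{eqn4.5}--\eqref{eqn4.6}, or of the shape operator is needed, since that analysis was already carried out in the proof of Theorem 4.4 and inherited by Corollary 4.4.
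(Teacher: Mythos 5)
Your proposal is correct and matches the paper's (implicit) route: the corollary is stated without proof as an immediate specialization of Corollary 4.4, and your verification that $f_1-f_3=\frac{c+3}{4}-\frac{c-1}{4}=1$ gives $(f_1-f_3)^2+1=2\neq 0$ is exactly the check needed. Your additional observation that $(f_1-f_3)^2+1\geq 1>0$ holds for any real-valued functions is a fair point, though it does not affect the argument.
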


\section{Ricci solitons on invariant submanifolds}
Let us take $(g,\xi,\lambda)$ be a Ricci soliton on a invariant submanifold $M$ of a generalized Sasakian-space-form  $\overline{M}^{2n+1}(f_1,f_2,f_3)$.
Then we have
\begin{equation}\label{eqn5.1}
  (\pounds _\xi g)(Y,Z)+2S(Y,Z)+2\lambda g(Y,Z)=0.
\end{equation}
From (\ref{eqn2.32}), we get
\begin{eqnarray}
\label{eqn5.2}
(\pounds _\xi g)(Y,Z)&=&g(\nabla_Y\xi,Z)+g(Y,\nabla_Z\xi)\\
\nonumber &=&0.
\end{eqnarray}
Using $(\ref{eqn5.2})$ in $(\ref{eqn5.1})$ we get
\begin{equation*}
  S(Y,Z)=-\lambda g(Y,Z).
\end{equation*}
This leads to the following:
\begin{theorem}
If $(g,\xi,\lambda)$ is a Ricci soliton on an invariant submanifold $M$ of a generalized Sasakian-space-form  $\overline{M}^{2n+1}(f_1,f_2,f_3)$,
then $M$ is Einstein.
\end{theorem}
Now we take $(g,\xi,\lambda)$ is a Ricci soliton on an invariant submanifold $M$ of a
generalized Sasakian-space-form  $\overline{M}^{2n+1}(f_1,f_2,f_3)$
with respect to semi-symmetric metric connection $\widetilde{\overline{\nabla}}$. Then we have
\begin{equation}\label{eqn5.3}
(\widetilde{\pounds}_\xi g)(Y,Z)+2\widetilde{ S}(Y,Z)+2\lambda g(Y,Z)=0.
\end{equation}
From $(\ref{eqn4.3})$, we get
\begin{equation}\label{eqn5.4}
\widetilde{\nabla}_X\xi =X-\eta(X)\xi-(f_1-f_3)\phi X.
\end{equation}
In view of $(\ref{eqn5.4})$, we get
\begin{eqnarray}
\label{eqn5.5}
(\widetilde{\pounds}_\xi g)(Y,Z) &=& g(\widetilde{\nabla}_Y\xi,Z)+g(Y,\widetilde{\nabla}_Z\xi)\\
\nonumber &=& 2[g(Y,Z)-\eta(Y)\eta(Z)].
\end{eqnarray}
Using (\ref{eqn5.3}) we can compute that
\begin{equation}\label{eqn5.6}
\widetilde{S}(Y,Z)=S(Y,Z)-(2n-1)\alpha (X,Y)-ag(X,Y),
\end{equation}
where $\alpha(X,Y)=g(LX,Y)=(\widetilde{\nabla}_X\eta)(Y)+\frac{1}{2}g(X,Y)$
and $a=\text{trace}(\alpha)$.\\
In view of $(\ref{eqn5.5})$ and $(\ref{eqn5.6})$ we get
\begin{equation*}
S(Y,Z) = (a-\lambda-1)g(Y,Z)+\eta(Y)\eta(Z) + (2n-1)\alpha(Y,Z),
\end{equation*}
which implies that $M$ is pseudo $\eta$-Einstein.\\
This leads to the following:
\begin{theorem}
If $(g,\xi,\lambda)$ is a Ricci soliton on an invariant submanifold $M$ of a generalized Sasakian-space-form  $\overline{M}^{2n+1}(f_1,f_2,f_3)$,
with respect to semi-symmetric metric connection, then $M$ is pseudo $\eta$-Einstein.
\end{theorem}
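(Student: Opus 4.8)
The plan is to work directly from the defining soliton equation \eqref{eqn5.3} attached to the semi-symmetric metric connection and to massage it into a single closed-form expression for the (Levi-Civita) Ricci tensor $S$ of $M$ that visibly matches the pseudo $\eta$-Einstein template \eqref{eqn2.49}. Two inputs drive everything: an explicit formula for $\widetilde{\nabla}_X\xi$ on $M$, and the already-recorded conversion \eqref{eqn2.44} between the Ricci tensors of the two connections.

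First I would pin down $\widetilde{\nabla}_X\xi$. Substituting \eqref{eqn2.32} into the induced-connection relation \eqref{eqn4.3} gives $\widetilde{\nabla}_X\xi = X - \eta(X)\xi - (f_1-f_3)\phi X$. Plugging this into the symmetric expression $(\widetilde{\pounds}_\xi g)(Y,Z) = g(\widetilde{\nabla}_Y\xi,Z) + g(Y,\widetilde{\nabla}_Z\xi)$, the decisive simplification is that the two $\phi$-contributions $-(f_1-f_3)g(\phi Y,Z)$ and $-(f_1-f_3)g(Y,\phi Z)$ annihilate each other by the skew-symmetry \eqref{eqn2.4}; what survives is $(\widetilde{\pounds}_\xi g)(Y,Z) = 2[g(Y,Z) - \eta(Y)\eta(Z)]$.

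Next I would feed this, together with the conversion $\widetilde{S}(Y,Z) = S(Y,Z) - (2n-1)\alpha(Y,Z) - a\,g(Y,Z)$ coming from \eqref{eqn2.44} (where $a = \text{trace}(\alpha)$ and $\alpha(X,Y) = (\widetilde{\nabla}_X\eta)(Y) + \tfrac{1}{2}g(X,Y)$), into \eqref{eqn5.3} and solve for $S$. Clearing the factor two, this produces $S(Y,Z) = (a-\lambda-1)g(Y,Z) + \eta(Y)\eta(Z) + (2n-1)\alpha(Y,Z)$, which is the formula to be interpreted.

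The only genuinely delicate point, and the step I expect to be the main obstacle, is reading this off as an honest instance of \eqref{eqn2.49}: one must supply a tensor $D$ with $D(X,\xi)=0$. Taking $D = \alpha$ naively fails, since a short computation with the formula for $\widetilde{\nabla}_X\xi$ gives $\alpha(X,\xi) = \tfrac{1}{2}\eta(X) \neq 0$. I would therefore absorb this diagonal defect into the $\eta\otimes\eta$ term by setting $D(Y,Z) := \alpha(Y,Z) - \tfrac{1}{2}\eta(Y)\eta(Z)$, which now satisfies $D(X,\xi)=0$ while shifting the coefficient of $\eta(Y)\eta(Z)$ to $\tfrac{2n+1}{2}$. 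With $p = a-\lambda-1$, $q = \tfrac{2n+1}{2} \neq 0$, $s = 2n-1 \neq 0$ (using the standing hypothesis $n>1$), the expression is exactly of the form \eqref{eqn2.49}, so $M$ is pseudo $\eta$-Einstein, as claimed.
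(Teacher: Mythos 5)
Your proposal is correct and follows the paper's own proof essentially step for step: you compute $\widetilde{\nabla}_X\xi = X-\eta(X)\xi-(f_1-f_3)\phi X$ from (4.3), deduce $(\widetilde{\pounds}_\xi g)(Y,Z)=2[g(Y,Z)-\eta(Y)\eta(Z)]$ via the skew-symmetry of $\phi$, substitute this together with the Ricci conversion (2.44) into the soliton equation (5.3), and solve for $S$, exactly as the paper does. The only difference is at the very end: the paper stops at $S=(a-\lambda-1)g+\eta\otimes\eta+(2n-1)\alpha$ and simply asserts the pseudo $\eta$-Einstein conclusion, whereas you observe that $\alpha(X,\xi)=\tfrac12\eta(X)\neq 0$ and repair the identification with (2.49) by taking $D=\alpha-\tfrac12\,\eta\otimes\eta$, a legitimate completion of a detail the paper leaves unverified rather than a genuinely different route.
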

\section{conclusion}
A Riemannian manifold with constant sectional curvature $c$ is called a real-space-form and its curvature tensor $\overline{R}$
satisfies the condition (\ref{eqn1.1}). Models for these spaces are the Euclidean spaces ($c=0$),
the spheres ($c>0$) and the hyperbolic spaces ($c<0$).

In contact metric geometry, a Sasakian manifold with constant $\phi$-sectional curvature
is called Sasakian-space-form and the curvature tensor of such a manifold is given by (\ref{eqn1.2}).
These spaces can also be modeled depending on $c>-3$, $c=3$ or $c<-3$.

A generalized Sasakian-space-form can be regarded as a generalization of Sasakian-space-form.
By virtue of Theorem $3.1$, Corollary $3.3$, Corollary $3.6$, Theorem $3.2$, and Theorem $3.3$, we can state the following:
\begin{theorem}
Let $M$ be an invariant submanifold of a generalized Sasakian-space-form
$\overline{M}^{2n+1}(f_1,f_2,f_3)$. Then the following statements are equivalent :
\begin{enumerate}
\item[(i)] $M$ is totally geodesic,
\item[(ii)]  $M$ is parallel with $f_1 \neq f_3$,
\item[(iii)] $M$ is semiparallel with $f_1 \neq f_3$,
\item[(iv)] $M$ is $2$-semiparallel with $f_1 \neq f_3$,
\item[(v)] $M$ is concircularly semiparallel with $f_1 \neq f_3$ and $r \neq {2n(2n+1)(f_1-f_3)}$,
\item[(vi)]  $M$ is concircularly 2-semiparallel with $f_1 \neq f_3$ and $r \neq {2n(2n+1)(f_1-f_3)}$.
\end{enumerate}
\end{theorem}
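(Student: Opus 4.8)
The plan is to exploit the fact that Theorem 6.1 is a packaging result: each of the conditions (ii)--(vi) has already been shown, in Section 3, to be equivalent to condition (i), \emph{totally geodesic}. I would therefore organize the argument around (i) as a hub, invoke the five biconditionals already in hand, and then close by transitivity of logical equivalence. Concretely, (i)$\Leftrightarrow$(ii) is Theorem 3.1, (i)$\Leftrightarrow$(iii) is Corollary 3.3, (i)$\Leftrightarrow$(iv) is Corollary 3.6, (i)$\Leftrightarrow$(v) is Theorem 3.2, and (i)$\Leftrightarrow$(vi) is Theorem 3.3. No new computation is needed; the task is to cite these correctly and observe that five spokes through a common hub render all six vertices mutually equivalent.

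For the reader's benefit I would first recall the single mechanism common to every spoke. In each case the nontrivial direction (the prescribed curvature or derivative condition $\Rightarrow h=0$) is obtained by substituting $\xi$ into the relevant defining identity and applying the invariant-submanifold relations $h(X,\xi)=0$ from (\ref{eqn2.31}) and $\nabla_X\xi=-(f_1-f_3)\phi X$ from (\ref{eqn2.32}). For the parallel, semiparallel and $2$-semiparallel clauses this collapses the hypothesis to a scalar multiple of $(f_1-f_3)\,h(\cdot,\phi\,\cdot)$, so $f_1\neq f_3$ forces $h=0$; for the two concircular clauses the same substitution instead produces the factor $\bigl[f_1-f_3-\tfrac{r}{2n(2n+1)}\bigr]$, so one additionally needs $r\neq 2n(2n+1)(f_1-f_3)$. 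The reverse direction of every spoke is immediate, since $h=0$ annihilates any expression built covariantly from $h$.

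The only real care is bookkeeping: I must confirm that the hypothesis folded into each clause matches the hypothesis of the theorem being invoked. Clauses (ii)--(iv) carry only $f_1\neq f_3$, whereas the concircular clauses (v) and (vi) must carry both $f_1\neq f_3$ and the scalar-curvature restriction $r\neq 2n(2n+1)(f_1-f_3)$, exactly as in Theorems 3.2 and 3.3. Because these side conditions are built into the individual statements, the five biconditionals are available without further qualification and the transitivity step applies verbatim. There is essentially no mathematical obstacle at this stage: the genuinely substantive work—the semiparallel and concircular reductions—was already carried out in Section 3, and the present theorem merely consolidates those results into a single equivalence.
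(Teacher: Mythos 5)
Your proposal is correct and follows essentially the same route as the paper: Theorem 6.1 is stated there as a direct consolidation of Theorem 3.1, Corollary 3.3, Corollary 3.6, Theorem 3.2 and Theorem 3.3, with (i) serving as the common hub and no new computation, exactly as you describe. Your recap of the underlying mechanism (substituting $\xi$ and using $h(X,\xi)=0$ together with $\nabla_X\xi=-(f_1-f_3)\phi X$) accurately summarizes the Section 3 arguments being cited.
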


Again from Corollary 3.1, Corollary 3.4, Corollary 3.7, Corollary 3.2 and Corollary 3.5,
we can state the following:
\begin{corollary}
In an invariant submanifold of a Sasakian-space-form, the following statements are equivalent:
\begin{enumerate}
\item[(i)] the submanifold is totally geodesic,
\item[(ii)] the second fundamental form of the submanifold is parallel,
\item[(iii)] the second fundamental form of the submanifold is semiparallel,
\item[(iv)] the second fundamental form of the submanifold is 2-semiparallel,
\item[(v)] the second fundamental form of the submanifold is concircularly semiparallel with $r\neq{2n(2n+1)}$,
\item[(vi)] the second fundamental form of the submanifold is concircularly 2-semiparallel with $r\neq{2n(2n+1)}.$
\end{enumerate}
\end{corollary}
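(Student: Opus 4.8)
The plan is to recognize that this corollary is nothing more than a consolidation of the five Sasakian-space-form corollaries already established, each of which ties one of the listed conditions to the single anchor (i), total geodesy. No fresh computation is required; the task is purely to match each corollary with its condition and then to invoke transitivity of logical equivalence.

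First I would line up the five equivalences. Corollary 3.1 asserts that the submanifold is totally geodesic if and only if its second fundamental form is parallel, giving (i) $\Leftrightarrow$ (ii). Likewise Corollary 3.4 yields (i) $\Leftrightarrow$ (iii) via semiparallelism, Corollary 3.7 yields (i) $\Leftrightarrow$ (iv) via $2$-semiparallelism, Corollary 3.2 yields (i) $\Leftrightarrow$ (v) via concircular semiparallelism together with the hypothesis $r \neq 2n(2n+1)$, and Corollary 3.5 yields (i) $\Leftrightarrow$ (vi) via concircular $2$-semiparallelism, again with $r \neq 2n(2n+1)$. It is worth noting that the $f_1 \neq f_3$ restriction carried by the generalized statements is automatic in the Sasakian setting: for a genuine Sasakian-space-form one has $f_1 - f_3 = \frac{c+3}{4} - \frac{c-1}{4} = 1$, so that hypothesis never binds, and the scalar-curvature condition $r \neq 2n(2n+1)(f_1 - f_3)$ collapses to $r \neq 2n(2n+1)$ exactly as stated for conditions (v) and (vi).

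With (i) isolated as the common hub, I would close the argument by transitivity: each of (ii) through (vi) is equivalent to (i), so any two of them are equivalent to one another, whence all six statements are mutually equivalent. This completes the proof.

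The point requiring care — rather than a genuine obstacle, since the result is bookkeeping — is to attach the scalar-curvature exclusion $r \neq 2n(2n+1)$ to precisely conditions (v) and (vi) and to no others, and to verify that the Sasakian specialization forces $f_1 - f_3 = 1 \neq 0$, so that the parallel, semiparallel, and $2$-semiparallel equivalences hold without any supplementary hypothesis.
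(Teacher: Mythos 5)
Your proposal is correct and matches the paper's own derivation exactly: the paper obtains this corollary by assembling Corollaries 3.1, 3.4, 3.7, 3.2 and 3.5 (the Sasakian specializations of the parallel, semiparallel, $2$-semiparallel, concircularly semiparallel and concircularly $2$-semiparallel results), each of which ties one condition to total geodesy, and then concludes by transitivity. Your observation that $f_1-f_3=\frac{c+3}{4}-\frac{c-1}{4}=1$ makes the hypothesis $f_1\neq f_3$ automatic in the Sasakian case, while $r\neq 2n(2n+1)(f_1-f_3)$ collapses to $r\neq 2n(2n+1)$, is precisely the point underlying the paper's passage from the generalized to the Sasakian statements.
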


The invariant submanifold $M$ of a generalized Sasakian-space-form $\overline{M}^{2n+1}(f_1,f_2,f_3)$
with respect to semi-symmetric metric connection is also studied. It is shown that $M$ also admits
semi-symmetric metric connection and the second fundamental forms with respect to the Levi-Civita connection and semi-symmetric metric connections are equal.

By virtue of Theorem 4.3, Theorem 4.4 and Corollary 4.4, we can state the following:

\begin{theorem}
Let $M$ be an invariant submanifold of a generalized Sasakian-space-form $\overline{M}^{2n+1}(f_1,f_2,f_3)$ with respect to
semi-symmetric metric connection such that $(f_1-f_3)^2+1 \neq0.$ Then the following statements are equivalent:
\begin{enumerate}
\item[(i)] $M$ is totally geodesic,
\item[(ii)] $h$ is recurrent with respect to semi-symmetric metric connection,
\item[(iii)] $h$ is $2$-recurrent with respect to semi-symmetric metric connection,
\item[(iv)] $h$ is generalized $2$-recurrent with respect to semi-symmetric metric connection.
\end{enumerate}
\end{theorem}
\begin{corollary}
Let $M$ be an invariant submanifold of a Sasakian-space-form with respect to
semi-symmetric metric connection. Then the following statements are equivalent:
\begin{enumerate}
\item[(i)] $M$ is totally geodesic,
\item[(ii)] $h$ is recurrent with respect to semi-symmetric metric connection,
\item[(iii)] $h$ is $2$-recurrent with respect to semi-symmetric metric connection,
\item[(iv)] $h$ is generalized $2$-recurrent with respect to semi-symmetric metric connection.
\end{enumerate}
\end{corollary}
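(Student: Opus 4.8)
The plan is to deduce this statement directly from Theorem 6.2 (the semi-symmetric metric connection version for generalized Sasakian-space-forms), since a Sasakian-space-form is simply a generalized Sasakian-space-form for a special choice of the three functions $f_1,f_2,f_3$.

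First I would recall, by comparing \eqref{eqn1.2} with \eqref{eqn1.3}, that a Sasakian-space-form of constant $\phi$-sectional curvature $c$ is exactly the generalized Sasakian-space-form $\overline{M}^{2n+1}(f_1,f_2,f_3)$ with
\begin{align*}
f_1=\frac{c+3}{4},\qquad f_2=f_3=\frac{c-1}{4}.
\end{align*}
Thus any invariant submanifold $M$ of a Sasakian-space-form, equipped with the semi-symmetric metric connection, is a special instance of the setting of Theorem 6.2, and the notions of recurrent, $2$-recurrent and generalized $2$-recurrent second fundamental form with respect to that connection transfer without any change.

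Next I would verify that the nondegeneracy hypothesis of Theorem 6.2 is automatically satisfied in this case. Computing directly,
\begin{align*}
f_1-f_3=\frac{c+3}{4}-\frac{c-1}{4}=1,
\end{align*}
so that $(f_1-f_3)^2+1=2\neq 0$ irrespective of the value of $c$. Hence the condition $(f_1-f_3)^2+1\neq 0$ demanded by Theorem 6.2 is never violated on a Sasakian-space-form, and no extra assumption on $c$ is needed.

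Finally I would simply invoke Theorem 6.2 with these values of $f_1,f_2,f_3$: since its hypothesis holds, statements (i)--(iv) are equivalent, which is precisely the assertion of the corollary. The only point requiring checking is the arithmetic identity $f_1-f_3=1$, which secures the hypothesis of Theorem 6.2; there is no substantive obstacle beyond this specialization, so the corollary follows at once.
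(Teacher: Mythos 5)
Your proposal is correct and is essentially the paper's own (implicit) argument: the corollary is obtained by specializing Theorem 6.2 to the Sasakian-space-form case $f_1=\frac{c+3}{4}$, $f_2=f_3=\frac{c-1}{4}$, where $f_1-f_3=1$ guarantees $(f_1-f_3)^2+1=2\neq 0$. Nothing further is needed.
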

In section 5, we have studied invariant submanifolds of generalized
Sasakian-space-forms $\overline{M}^{2n+1}(f_1,f_2,f_3)$ whose metric are Ricci solitons.
From Theorem 5.1 and Theorem 5.2, we can state the following:\\
\medskip
\noindent{\bf Theorem 6.3.} Let $(g,\xi,\lambda)$ be a Ricci soliton on an invariant submanifold $M$ of a generalized
Sasakian-space-form $\overline{M}^{2n+1}(f_1,f_2,f_3)$. Then the following holds:
\begin{center}
\begin{tabular}{|c|c|}
\hline  connection of $\overline{M}$ & $M$ \\
\hline Riemannian & Einstein\\
\hline semi-symmetric metric & pseudo $\eta$-Einstein\\
\hline
\end{tabular}
\end{center}
\vspace{0.1in}
\noindent{\bf Acknowledgement:} The first author (S. K. Hui)  gratefully acknowledges to
the SERB (Project No.: EMR/2015/002302), Govt. of India for financial assistance of the work. The third author (A.H. Alkhaldi) would like to express his gratitude to King Khalid University, Saudi Arabia for providing administrative and technical support.

\vspace{0.1in}

Shyamal Kumar Hui

\noindent Department of Mathematics, The University of Burdwan, Golapbag, Burdwan 713104, West Bengal, India

\noindent E-mail: skhui@math.buruniv.ac.in\\

Siraj Uddin

\noindent Department of Mathematics, Faculty of Science, King Abdulaziz University, Jeddah 21589, Saudi Arabia

\noindent E-mail: siraj.ch@gmail.com\\

ALi H. Alkhaldi

\noindent Department of Mathematics, College of Science, King Khalid University, P.O. Box 9004, Abha, Saudi Arabia

\noindent E-mail: ahalkhaldi@kku.edu.sa\\

 Pradip Mandal

\noindent Department of Mathematics, The University of Burdwan, Golapbag, Burdwan 713104, West Bengal, India

\noindent E-mail: pradip2621994@rediffmail.com
\end{document}